\newtheorem{theorem}{Theorem}[section] 
\newtheorem{lemma}[theorem]{Lemma}     
\newtheorem{corollary}[theorem]{Corollary}
\newtheorem{proposition}[theorem]{Proposition}
\newtheorem{definition}[theorem]{Definition}
\newtheorem*{thma}{Theorem}
\newcommand{\const}{\mathrm{const}\,}
\newcommand{\spam}{\mathop{\mathrm{span}}}
\newcommand{\sphere}{\mathbb{S}^d}
\newcommand{\nat}{\mathbb{N}}
\newcommand{\reals}{\mathbb{R}}
\newcommand{\comps}{\mathbb{C}}
\newcommand{\dif}{\mathrm{d}}
\newcommand{\dist}{\mathrm{dist}}
\newcommand{\order }{ 2m}
\newcommand{\miniorder}{{2m}}%
\newcommand{\val}{2m}
\newcommand{\s}{\vec{s}\;}
\newcommand{\e}{\boldsymbol{e}}
\renewcommand{\L}{\mathcal{L}}
\renewcommand{\k}{\boldsymbol{k}}
\newcommand{\kernel}{\k}
\renewcommand{\a}{\boldsymbol{a}}
\renewcommand{\l}{{\vec{\ell}}\;}
\title{Polyharmonic Approximation on the Sphere} 
\author{T. Hangelbroek}
\address{Thomas Hangelbroek, Department of Mathematics, Texas A\&M University, College Station, TX 77843}
\email{hangelbr@math.tamu.edu}
\thanks{Thomas Hangelbroek is supported by an NSF Postdoctoral Fellowship}
\subjclass[2000]{ 41A25, 41A63, 42C10, 31B30}
\keywords{Surface spline, polyharmonic kernel, positive definite kernel, sphere, Besov space, Sobolev space}
\begin{document}
\maketitle
\begin{abstract}
The purpose of this article is to provide new error estimates for a popular type of SBF approximation on the sphere: approximating by linear combinations of Green's functions of polyharmonic differential operators. We show that the $L_p$ approximation order for this kind of approximation is $\sigma$ for functions having $L_p$ smoothness $\sigma$ (for $\sigma$ up to the order of the underlying differential operator, just as in univariate spline theory). This improves previous error estimates, which penalized the approximation order when measuring error in $L_p$, $p>2$ and held only in a restrictive setting when measuring error in $L_p$,  $p<2$.
\end{abstract}

\section{Introduction} 
\label{intro}
Spherical basis functions (or SBFs) have been used with much success in multivariate approximation theory, statistics and a multitude of other scientific disciplines. 
At the heart of the SBF methodology is the creation of an approximant 
$$s_{\Xi}(x) = \sum_{\xi \in \Xi} A_{\xi} \k(x\cdot \xi)$$
by taking a linear combination of rotations of
a fixed kernel $(x,\alpha)\mapsto \k(x\cdot \alpha)$ (known as an SBF, or, sometimes, a {\em zonal} kernel). 

The success of the SBF methodology derives from its ability to generate approximants from data having arbitrary geometry -- a desirable quality on spheres, where geometry of data is always essentially unstructured: for an arbitrary spacing, there are no regular distributions of points on the sphere, meaning that approximation techniques requiring grids, regular triangulations, or other geometrical props do not work in this setting. 
Interpolation  \cite{Gol}
, \cite{Free}
, \cite{JSW} and \cite{HuMo2} and
other SBF approximation methods
\cite{MNW2}, \cite{MNiYao}, \cite{Wahba}, \cite{LNWW} 
(see bibliography in \cite{Free} for even more examples), 
are both frequently used to fit scattered data on the sphere.

Our focus is not on how to treat spherical data, but how to approximate smooth 
functions using SBF approximants, having access to as much information about the target function as necessary.
The choice of coefficients $(A_{\xi})_{\xi\in\Xi}$ is a crucial element in the performance of the approximation, but,
at the outset, we are not focused on a specific method of choosing coefficients.
Instead, we wish to investigate the approximation power of this methodology for a robust family -- the polyharmonic kernels (see Definition \ref{ker_def}) -- 
rather than any specific implementation or algorithm; 
the main concern is to establish accurate error analysis for 
approximation from spaces of polyharmonic SBFs,
$S(\k,\Xi):= 
\spam_{\xi \in \Xi} \k(\cdot,\xi)+ \Pi,$
where a low dimensional space of elementary functions, $\Pi$, may be added 
to the span of the SBF.

The method for gauging the approximation power is the $L_p$ {\em approximation order}, which measures the decay of the error in approximating from $S(\k,\Xi)$ as $\Xi$ becomes dense in $\sphere$.  
For target functions $f$ from a class $\mathcal{F}$, the approximation order is the largest exponent $s$ so that  
\[\|f-s_{f,\Xi}\|_{L_p(\mathbb{S}^d)} = \mathcal{O}(h^s)\]
where $h$, the `fill distance', measures the density of $\Xi$ in $\sphere$ 
(see the following section for a precise definition of fill distance). 
In this setting, the rate is given in terms of the density of the centers $\Xi$, and depends strongly on the class $\mathcal{F}$ of target functions. 

When approximants are chosen from a predetermined linear space, independent of the target function, as is the case here (in contrast to {\em nonlinear approximation}, where the set of centers $\Xi$ could be chosen independent of $f$), precise approximation theory ties the $L_p$ approximation order to the $L_p$ smoothness of the target function, e.g., by measuring the error in terms of an $L_p$ modulus of smoothness or by
selecting target functions in an $L_p$ Sobolev or Besov space, $\mathcal{F} =W_p^s$ or $B_{p,q}^s$.  

The prevailing method for estimating error for SBF (and, more generally, kernel) approximation has been to assume the target function resides in a reproducing kernel Hilbert space, often called the {\em native space}, for which the SBF acts as the reproducing kernel. 
Quite often, the native space is actually an $L_2$ Sobolev space. 
One drawback of this  approach  has been that it
precludes finding faster rates for functions with more smoothness, or slower rates for less smoothness. Another drawback is that the $L_p$ approximation orders degrade as $p$ aberrates from $2$
-- see, e.g., \cite{Hang1} for an example of this criticism for `radial basis functions' (or RBFs) in domains in $\reals^2$.  
We remark that \cite[Corollary 3.5]{LNWW}, and \cite[Corollary 3 (a)]{JSW} are examples of this phenomenon, but we place special emphasis on the results of Hubbert and Morton \cite[Theorem 3.4, 3.8]{HuMo1},  because their results are the current state-of-the-art for the setting of this article. We paraphrase their result.
\begin{thma}[Hubbert, Morton]
 For an SBF $\k$ having native space $W_2^{m}(\sphere)$,  and for sufficiently dense centers $\Xi$,
if $f\in W_2^{m}(\sphere)$ then the SBF interpolant $s_f$ satisfies:
\begin{equation*}
\|f-s_{f}\|_{L_p(\sphere)}  
= 
\mathcal{O}
\left(
  h^{m-(\frac{d}{2} - \frac{d}{p})_{+}}
\right)
\end{equation*} 
If $f\in W_2^{2m}(\sphere)$, $s_f$ satisfies:
\begin{equation*}
\|f-s_{f}\|_{L_p(\sphere)}  
= 
\mathcal{O}
\left(
  h^{2m -(\frac{d}{2} - \frac{d}{p})_{+}}
\right)
\end{equation*}
\end{thma}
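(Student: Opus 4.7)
The plan is to derive each $L_p$ estimate by establishing the endpoint cases $p=2$ and $p=\infty$ and then interpolating; this reduces the theorem to four bounds, one pair for $f\in W_2^m(\sphere)$ and one pair for $f\in W_2^{2m}(\sphere)$.

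Two structural ingredients drive the argument. First, since the native space of $\k$ is $W_2^m(\sphere)$, the interpolant $s_f$ is the Golomb--Weinberger minimal-norm interpolant: it coincides with the orthogonal projection of $f$ onto $V=\spam\{\k(\cdot,\xi):\xi\in\Xi\}$ in the $W_2^m$ inner product, so $\|f-s_f\|_{W_2^m}=\min_{v\in V}\|f-v\|_{W_2^m}$ and in particular $f-s_f\perp V$. Second, once $\Xi$ is sufficiently dense, one has a sampling inequality $\|u\|_{L_2(\sphere)}\le C h^m \|u\|_{W_2^m}$ for every $u\in W_2^m(\sphere)$ vanishing on $\Xi$, and a pointwise power-function bound $|f(x)-s_f(x)|\le P_\Xi(x)\,\|f\|_{W_2^m}$ with $\|P_\Xi\|_{L_\infty(\sphere)}=O(h^{m-d/2})$. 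Both are consequences of local polynomial reproduction on $\sphere$.

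Applying the sampling inequality to $u=f-s_f$ (which vanishes on $\Xi$) together with $\|f-s_f\|_{W_2^m}\le\|f\|_{W_2^m}$ gives $\|f-s_f\|_{L_2}=O(h^m\|f\|_{W_2^m})$; the power function yields $\|f-s_f\|_{L_\infty}=O(h^{m-d/2}\|f\|_{W_2^m})$. The H\"older interpolation $\|u\|_{L_p}\le \|u\|_{L_2}^{2/p}\|u\|_{L_\infty}^{1-2/p}$ for $p\ge2$ then produces the exponent $m-(d/2-d/p)$; for $1\le p\le2$, finiteness of $|\sphere|$ gives $\|u\|_{L_p}\le C\|u\|_{L_2}$, matching the claim since $(d/2-d/p)_+=0$ on that range.

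The doubled exponent for $f\in W_2^{2m}(\sphere)$ is the main technical obstacle, and I would handle it by Aubin--Nitsche-style duality. Writing $T:W_2^m\hookrightarrow L_2$ for the inclusion, its adjoint $T^{\ast}:L_2\to W_2^m$ is essentially convolution with $\k$ and maps $L_2$ boundedly into $W_2^{2m}$ because $\k$ is (modulo low-order spherical harmonics) the Green's function of an elliptic operator of order $2m$. For any $g\in L_2$,
\[
\langle f-s_f,g\rangle_{L_2}=\langle f-s_f,T^{\ast}g\rangle_{W_2^m}=\langle f-s_f,T^{\ast}g-s_{T^{\ast}g}\rangle_{W_2^m}\le\|f-s_f\|_{W_2^m}\,\|T^{\ast}g-s_{T^{\ast}g}\|_{W_2^m}.
\]
The decisive ingredient is the improved $W_2^m$ best-approximation estimate from $V$, namely $\min_{v\in V}\|w-v\|_{W_2^m}=O(h^m\|w\|_{W_2^{2m}})$ for $w\in W_2^{2m}$, which one typically obtains by constructing a local $V$-valued quasi-interpolant reproducing enough spherical polynomials; this is the genuinely hard step underlying the doubling mechanism. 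Applying that bound to both $f$ and $T^{\ast}g$ yields two factors of $h^m$ and gives $\|f-s_f\|_{L_2}=O(h^{2m}\|f\|_{W_2^{2m}})$; a parallel pointwise version, with $g$ replaced by a Dirac functional and $T^{\ast}g$ by $\k(\cdot,x)$, produces $\|f-s_f\|_{L_\infty}=O(h^{2m-d/2}\|f\|_{W_2^{2m}})$, and a final H\"older interpolation completes the doubled case.
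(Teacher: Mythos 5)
The theorem you are asked about is not proved in this paper: it is a paraphrase of Theorems~3.4 and~3.8 of Hubbert and Morton, cited in the introduction precisely as the state of the art that the paper then seeks to \emph{improve}. So there is no ``paper proof'' to compare against; the paper's own method (replacing the kernel in the integral identity~(\ref{rep}) by a local linear combination of its rotations, following DeVore--Ron) is an entirely different machine, and its whole point is to get rid of the $(\tfrac d2-\tfrac dp)_+$ penalty that your argument correctly reproduces.

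Taken on its own terms, your proposal is essentially the correct and standard native-space argument that underlies the Hubbert--Morton result: minimal-norm interpolation, the sampling (or ``zeros'') inequality $\|u\|_{L_2}\le Ch^m\|u\|_{W_2^m}$, the power-function bound for $L_\infty$, and H\"older interpolation for $2\le p\le\infty$, with the doubled rate obtained by Aubin--Nitsche duality plus the improved estimate $\min_{v\in V}\|w-v\|_{W_2^m}=O(h^m\|w\|_{W_2^{2m}})$. Two small points deserve care. First, that improved estimate does not require building a quasi-interpolant: it follows directly from orthogonality and the sampling inequality, since $\|w-s_w\|_{W_2^m}^2=\langle w-s_w,w\rangle_{W_2^m}=\langle w-s_w,\L w\rangle_{L_2}\le\|w-s_w\|_{L_2}\,\|\L w\|_{L_2}\le Ch^m\|w-s_w\|_{W_2^m}\|w\|_{W_2^{2m}}$. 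Second, in the pointwise duality step you cannot apply the $W_2^{2m}$-level estimate to $\k(\cdot,x)$, since $\k(\cdot,x)$ lies only in $W_2^{2m-d/2-\epsilon}$, not in $W_2^{2m}$; what you actually use there is $\|\k(\cdot,x)-s_{\k(\cdot,x)}\|_{W_2^m}=P_\Xi(x)=O(h^{m-d/2})$, paired with $\|f-s_f\|_{W_2^m}=O(h^m\|f\|_{W_2^{2m}})$, which still gives $O(h^{2m-d/2})$. With those two clarifications the argument is sound.

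It is worth stressing the contrast with the paper: your route ties the analysis to the $L_2$ (Hilbert-space) structure of the native space, which is exactly why the exponent degrades for $p>2$ and why smoothness is measured in $W_2$-scales even when error is measured in $L_p$. The paper's exchange/coefficient-kernel scheme replaces this with an $L_1$--$L_\infty$ operator-norm bound on the exchange kernel and Riesz--Thorin interpolation, yielding $L_p$ rates matched to $L_p$ smoothness with no $d/2-d/p$ penalty. Reading Section~5 and Theorem~\ref{Main} side by side with your argument is a good illustration of what is gained by abandoning the native-space Hilbert projection.
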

When $p>2$, each approximation order is penalized by subtracting a positive term: $\frac{d}{2} - \frac{d}{p}$; when $p<2$, the space of target functions is an $L_2$ Sobolev space of the form $W_2^{\sigma}(\sphere)$ which is embedded in $W_p^{\sigma}(\sphere)$. 
This should be contrasted with $M^{\text{th}}$ order univariate spline approximation, which provides $L_p$ approximation order $\sigma$ for functions in  $W_p^{\sigma}(\sphere)$ for a range of $0< \sigma \le M)$, where $M$ is the `saturation' order -- the rate beyond which any increase in smoothness fails to produce an increased rate of convergence. 
Our main results show (in Theorem \ref{Main} and its corollaries) for a polyharmonic kernel, $\k$, satisfying the conditions of the above theorem, and for a target function $f$ having smoothness $\sigma\le 2m$ in $L_p$ there is $s_{f,\Xi}\in S(\k,\Xi)$ so that
$\|f - s_{f,\Xi}\|_p = \mathcal{O}(h^{\sigma})$.

In this paper, we develop an approximation scheme delivering novel error estimates for a robust family of SBFs: the `polyharmonic' kernels. 
This is the family of Green's functions of iterated and perturbed Laplace \!--\! Beltrami operators (see Definition
\ref{ker_def} for a precise definition). 
Such kernels have been studied by Freeden and his collaborators, cf.~\cite{Free} and references therein. They include the Green's functions for $\Delta ^m$, and, thus, are direct generalizations of the periodic ``Bernoulli splines'' (famously studied in \cite{Gol})  and are, in some sense, the spherical analogues of the ``surface splines'' used in $\reals^d$. On the other hand, the SBFs obtained by directly restricting the
$\reals^{d+1}$ surface splines to $\sphere$ 
are, perhaps surprisingly, often represented in this family.

The scheme developed in this article is based on replacing the kernel in an integral identity by a linear combination of (few) scattered rotations of the kernel. This method has recently been introduced by DeVore and Ron in \cite{DeRo} where it was used to obtain nonlinear and local results for RBF approximation in the boundary-free, Euclidean setting. Later, it was used in \cite{Hang1}, \cite{Hang2} to provide precise approximation orders for RBF approximation in domains in $\reals^2$. 

The layout of this article is as follows. In Section 2 we discuss some basics of analysis on spheres. Section 3 introduces the kernels used in this paper and shows that they can be expressed as a sum of surface splines. In Section 4 we establish a basic strategy for exchanging the kernel by a linear combination of its copies. Section 5 estimates the error in making this exchange, while Section 6 collects our main results.

\section{Background}\label{Sec:Background}
We denote by $\sphere$ the unit sphere in $\reals^{d+1}$, and by $\omega_d$ we denote its volume. The distance between two points, $x$ and $\alpha$, on the sphere is written  $\dist(x,\alpha) := \arccos (x\cdot \alpha)$. The basic neighborhood is the spherical `cap' $C(\alpha,\rho):= \{x\in\sphere: \dist(x,\alpha)<\rho\}$. Throughout this article, $\Xi$ is assumed to be a finite subset of $\sphere$, and the `fill distance', 
$$h := h(\Xi,\sphere):= \max_{\alpha\in \sphere}\dist(\alpha,\Xi),$$ 
measures the density of $\Xi$ in $\sphere$

The spherical harmonic, as studied in \cite{Mull}, is the basic tool of Fourier analysis on the sphere. For each eigenvalue, $\nu_{\ell} := \ell(\ell+d-1)$ of the Laplace \!--\! Beltrami operator $\Delta$ on  $\mathbb{S}^d$, 
there corresponds an eigenspace of `spherical harmonics' of exact degree $\ell$, called $\mathcal{H}_{\ell}$, having dimension $N(d,\ell) := \frac{(2\ell +d-1)\Gamma(\ell +d-1)}{\Gamma(\ell+1)\Gamma(d)}$ with orthonormal (in the sense of $L_2$) basis $(Y_{m,\ell})_{m=1}^{N(d,\ell)}.$  The space of spherical harmonics of degree less than or equal to $L$ is denoted $\Pi_{L} = \sum_{\ell \le L}\mathcal{H}_{\ell}$. 

In this article, our focus is on {\em zonal kernels}. 
These are kernels on the sphere having the form 
$(x,\alpha)\mapsto \phi(x\cdot \alpha)$, 
where $\phi:[-1,1] \to \reals$. 
Such kernels, being the composition of an inner product with a univariate function, can be expressed in terms of an expansion in orthogonal polynomials. The Gegenbauer (or ultraspherical) polynomials,  $(P_{\ell}^{(\lambda)})_{\ell=0}^{\infty}$, are orthogonal on $[-1,1]$ with respect to the weight $(1-t^2)^{\lambda-1/2}$.
We expand zonal functions on $\sphere$ using 
$(P_{\ell}^{(\lambda_d)})_{\ell=0}^{\infty}$, with $\lambda_d:= \frac{d-1}{2}$.
Gegenbauer coefficients are 
$$a_{\ell} := \int_{-1}^1 \phi(t)P_{\ell}^{(\lambda_d)}(t)(1-t^2)^{(d-2)/2}\dif t$$
and the expansion is 
$\phi(x\cdot\alpha) = \sum_{\ell = 0}^{\infty} a_{\ell} P_{\ell}^{(\lambda_d)}(x\cdot\alpha)$. 
This can be expressed, via the addition theorem for spherical harmonics \cite[Theorem 2]{Mull}, as:
\[\phi(x\cdot \alpha) 
= 
\sum_{\ell = 0}^{\infty} 
  \frac{\lambda_d+\ell}{\omega_d\lambda_d}\, 
  \widehat{\phi}(\ell) 
  P_{\ell}^{(\lambda_d)}(x\cdot \alpha)
= 
\sum_{\ell = 0}^{\infty}
\sum_{m=1}^{N(d,\ell)} 
  \widehat{\phi}(\ell) Y_{\ell,m}(x)Y_{\ell,m}(\alpha)
\]
where we supplant the Gegenbauer coefficient $a_{\ell}$ by the Fourier coefficient 
$\widehat{\phi}(\ell):= \frac{\omega_d\lambda_d}{\lambda_d+\ell}a_{\ell}$.
We note that the polynomials used by M{\"u}ller, \cite{Mull}, which he calls Legendre polynomials and denotes by $\mathcal{P}_{\ell}$ (suppressing the dependence on $d$),  are normalized in $L_{\infty}$:
they satisfy 
$\|\mathcal{P}_{\ell}\|_{L_\infty[-1,1]} = \mathcal{P}_{\ell}(1) =1$. 
The Gegenbauer polynomials used here are 
normalized in 
$L_2([-1,1];(1-t^2)^{\lambda-1/2})$, 
and are related to M{\"u}ller's Legendre polynomials by:
$P_{\ell}^{(\lambda_d)}= {{\ell +2\lambda_d - 1}\choose{\ell}} P_{\ell}$. 
Basics of Gegenbauer polynomials can be found in \cite[Section 4.7]{Szeg}. 
A key result relates the smoothness of the kernel $\phi$ with the decay of its Fourier coefficients, $\widehat{\phi}(\ell)$.
\begin{proposition}\label{smoothness}
If 
$\sum_{\ell = 0}^{\infty} |\hat{\phi}(\ell)| \ell^{d + 2k-1} <\infty$ 
then $\phi\in C^{k}[-1,1]$.
\end{proposition}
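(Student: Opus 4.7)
The plan is to work with the Gegenbauer expansion $\phi(t) = \sum_{\ell \ge 0} a_\ell P_\ell^{(\lambda_d)}(t)$, where $a_\ell = \frac{\lambda_d + \ell}{\omega_d \lambda_d}\widehat\phi(\ell)$, so that $|a_\ell| \lesssim \ell\,|\widehat\phi(\ell)|$ for large $\ell$. The strategy is then standard: show that both the series and each of its termwise derivatives up to order $k$ converge uniformly on $[-1,1]$, invoke the classical theorem on differentiation of uniformly convergent series, and conclude $\phi\in C^k[-1,1]$.

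Step one is to differentiate termwise. I would recall the classical formula $\frac{d}{dt}P_\ell^{(\lambda)}(t) = 2\lambda\, P_{\ell-1}^{(\lambda+1)}(t)$ (see \cite[Section 4.7]{Szeg}), which iterates to
\[
\frac{d^j}{dt^j}P_\ell^{(\lambda_d)}(t) \;=\; c(\lambda_d,j)\,P_{\ell-j}^{(\lambda_d+j)}(t), \qquad 0 \le j \le k,
\]
with $c(\lambda_d,j)=2^j\lambda_d(\lambda_d+1)\cdots(\lambda_d+j-1)$ (and the derivative is $0$ when $\ell<j$). Step two is to bound these derivatives on $[-1,1]$ by their value at the endpoint: since $P_n^{(\mu)}$ attains its $L_\infty$-norm at $t=1$,
\[
\left\|\frac{d^j}{dt^j}P_\ell^{(\lambda_d)}\right\|_{L_\infty[-1,1]} \;\lesssim\; P_{\ell-j}^{(\lambda_d+j)}(1) \;=\; \binom{\ell-j+2\lambda_d+2j-1}{\ell-j} \;\lesssim\; \ell^{d+2j-2}
\]
for $\ell$ large, using the identity $P_n^{(\mu)}(1) = \binom{n+2\mu-1}{n}$ and $2\lambda_d+2j-1 = d+2j-2$.

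Step three combines these bounds with the size of $a_\ell$ to produce, for each $0\le j\le k$, the uniform majorant
\[
\sum_{\ell}|a_\ell|\left\|\frac{d^j}{dt^j}P_\ell^{(\lambda_d)}\right\|_{L_\infty[-1,1]} \;\lesssim\; \sum_\ell |\widehat\phi(\ell)|\,\ell^{d+2j-1} \;\le\; \sum_\ell |\widehat\phi(\ell)|\,\ell^{d+2k-1} \;<\;\infty,
\]
the last inequality by hypothesis. The Weierstrass $M$-test then gives uniform convergence of $\sum_\ell a_\ell P_\ell^{(\lambda_d)}$ and of each of its formally differentiated series (up to order $k$) on $[-1,1]$. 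Invoking $k$ times the theorem that uniform convergence of a series and its termwise derivative justifies interchanging derivative and summation, we conclude that $\phi$ is $k$-times continuously differentiable on $[-1,1]$, with $\phi^{(k)}$ equal to the sum of the $k$-fold termwise differentiated series.

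There is no substantive obstacle; the proof is a careful accounting of the two competing growths, namely the $\ell^{d-2}$ growth of $P_\ell^{(\lambda_d)}(1)$ and the extra $\ell^{2k}$ incurred by passing to the $k$-th derivative (with one additional factor of $\ell$ coming from the conversion between $a_\ell$ and $\widehat\phi(\ell)$), which together explain the exponent $d+2k-1$ appearing in the hypothesis. The only minor subtlety is that $P_\ell^{(\lambda_d+j)}$ may fail to attain its maximum at $t=\pm1$ when $\lambda_d+j<1/2$, but under the standing assumption $d\ge 2$ we have $\lambda_d\ge 1/2$, so the endpoint bound used above is valid throughout the range of $j$.
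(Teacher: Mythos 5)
Your proof is correct and follows essentially the same route as the paper's: iterate the Gegenbauer derivative identity $\frac{d}{dt}P_\ell^{(\lambda)} = 2\lambda P_{\ell-1}^{(\lambda+1)}$, bound the resulting polynomials by their endpoint values via \cite[Theorem 7.33.1]{Szeg}, and combine with the extra factor of $\ell$ from converting $\widehat\phi(\ell)$ to $a_\ell$ to obtain an absolutely and uniformly convergent majorant for the $k$-fold termwise differentiated series. The only cosmetic difference is notational (the paper writes the shifted parameter as $\lambda_{d+2j}$ rather than $\lambda_d+j$, and only records the bound for $j=k$, leaving the lower-order cases implicit); your explicit check of all $j\le k$ and your remark about the regime $\lambda<1/2$ are minor elaborations, not a different argument.
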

\begin{proof} 
From \cite[Equation (4.7.14)]{Szeg}, 
observe that the derivative of a Gegenbauer polynomial satisfies
$
\frac{d}{dt} P_{\ell+1}^{(\lambda_d)}(t) 
= 
2\lambda_d P_{\ell}^{(\lambda_{d+2})}(t).
$ 
Hence, for $k\le \ell$,
\[
\frac{d^k}{dt^k} P_{\ell}^{(\lambda_d)}(t) 
= 
2^k\lambda_d\lambda_{d+2}\cdots \lambda_{d+2k-2} 
P_{\ell-k}^{(\lambda_{d+2k})}(t), 
\]
while for $\ell<k$, the polynomial $P_{\ell}^{(\lambda_d)}$ is of degree at most $k-1$ and is  annihilated by $\frac{d^k}{dt^k}$.)
Since $\omega_{d+2}\lambda_{d+2}= \pi \omega_d$, it follows that 
$$
\left(
  \frac{\lambda_d + \ell }{\lambda_d \; \omega_d}
\right) 
\frac{d^k}{dt^k} P_{\ell}^{(\lambda_d)}(t) 
= 
2 (2\pi)^{k-1}  
\left(\frac{\lambda_d+\ell}{\omega_{d+2k-2}}\right) 
P_{\ell-k}^{(\lambda_{d+2k})}(t).
$$ 
Utilizing a uniform bound on Gegenbauer polynomials (\cite[Theorem 7.33.1]{Szeg}), 
$\max_{-1\le t\le 1}|P_{\ell}^{(\lambda)}(t)| = {{\ell +2\lambda - 1}\choose{\ell}}$, 
when $\lambda\ge 0$ we see that
\begin{eqnarray*}
\left|
  \left(
    \frac{\lambda_d + \ell}{\lambda_d \omega_d}
  \right) 
  \frac{d^k}{dt^k} 
  P_{\ell}^{(\lambda_d)}(t)
\right|
&\le&  
2 (2\pi)^{k-1}  
\left(
  \frac{\lambda_d+\ell}{\omega_{d+2k-2}}
\right) 
{{\ell +d+k - 2}\choose{\ell-k}}\\
& \le& 
C_{d,k} \, \ell^{d+2k-1}.
\end{eqnarray*}
The result follows because  the series 
$ 
\sum_{\ell=0}^{\infty} 
  \frac{\lambda_d+\ell}{\omega_d\lambda_d} 
  \hat{\phi}(\ell) 
  \frac{d^k}{dt^k}P_{\ell}^{(\lambda_d)}(t),
$
is absolutely convergent, and, hence, equals $\frac{d^k}{dt^k}\phi (t)$.
\end{proof}
When $1\le p< \infty$, 
the smoothness spaces we consider are the Sobolev (for integer smoothness) and Besov classes which we denote by  $W_p^k(\sphere)$ and $B_{p,\infty}^s(\sphere)$, respectively. 
For $p=\infty$, we consider $C^k(\sphere)$ and the Besov classes $B_{\infty,\infty}^{s}(\sphere)$.  
These can be defined on $\sphere$ in several, equivalent, customary ways. 
The simplest way to define Sobolev spaces is to use a partition of unity and local changes of variables to import the definition from $\reals^d$ as in \cite[Sect. 3]{LNWW}. See the reference \cite{Tri} for this and other definitions. 
Of principal importance to us is the fact that
 $L_p(\sphere) = W_p^0(\sphere)$ and
 $\Delta$ boundedly maps $W_p^s(\sphere)$ to $W_p^{s-2}(\sphere)$ (for $s\ge 2$).
 We postpone the discussion of Besov spaces until Section 6.

\section{Polyharmonic Kernels and Surface Splines}
The kernels we introduce in this section, the polyharmonic kernels, are fundamental solutions for certain elementary partial differential operators. 
In Section 3.1, we begin by defining the kernels in terms of the operators they invert.
This indirect approach is taken because it is key to understanding the approximation scheme discussed in subsequent sections. 
A more direct expression in terms of Gegenbauer polynomials, (\ref{green_expansion}), is also given.

Lemma \ref{GreensFns} provides an asymptotic expansion 
$G_m\sim \sum_{j=0}^{\infty}\gamma_j \phi_{s+j}$  
of polyharmonic kernels in terms of simpler kernels, called surface splines. 
This is developed in Section 3.3. 
In the course of demonstrating the asymptotic expansion, we make the complementary observation, Lemma \ref{ss_is_Green}, that the surface splines are polyharmonic kernels. 
This is the focus of Section 3.2. 

\subsection{Polyharmonic Kernels}
\begin{definition}[Polyharmonic Kernels] \label{ker_def}
Let $m>d/2$ be an integer. For 
 $r_1,\dots,r_m\in \comps$, 
the polyharmonic kernel $G_m=G(\, \cdot\, ;r_1,\dots,r_m)$, 
defined on $[-1,1)$, is the fundamental solution for the  product of perturbed Laplace -- Beltrami operators  
$(\Delta-r_1)\dots (\Delta-r_m)$.
\end{definition}

Our interest in polyharmonic kernels stems from certain integral identities they  satisfy. Such identities may hold for a general kernel $\k_{M}$ (not necessarily polyharmonic, or even zonal),
\begin{equation}\label{rep}
f(x) = 
\int_{\mathbb{S}^d} 
  \L_{M} (f-p_f)(\alpha) \k_{M}(x\cdot\alpha)\, 
\dif \alpha +p_f(x).
\end{equation}
where $\L_{M}$ is a differential operator of order $M$ 
whose nullspace is contained in the finite dimensional space 
$\Pi_{\mathcal{J}} = \sum_{j\in \mathcal{J}}\mathcal{H}_{j}$ 
of spherical harmonics of prescribed degrees $j \in \mathcal{J},$ 
and where 
$
p_f 
= 
\sum_{j\in \mathcal{J}} 
\sum_{m=1}^{N(d,j)} 
  \langle f, Y_{j,m}\rangle Y_{j,m}
$ 
is the ($L_2$) orthogonal projection onto this space. 
Because $\Pi_{\mathcal{J}}$ is finite dimensional, 
$\|p_f\|_{X} \le \const(X,p,\mathcal{J}) \|f\|_p$
for any norm $\|\cdot\|_X.$ 
Hence, the identity (\ref{rep}) extends, by continuity, to every space $W_p^{M}(\sphere)$, with $1\le p < \infty$.
\begin{definition}
If (\ref{rep}) holds for all $f\in C^{M}(\sphere)$,  then $\k_{M}$ is said to satisfy an integral identity of order $M$.
\end{definition}
When $\k_{2m} = G(\cdot ; r_1,\dots,r_m)$, the operator is  
$\L_{2m} = (\Delta -r_1)\dots(\Delta-r_m)$, 
and $\mathcal{J}$ must at least capture the indices corresponding to the eigenvalues used to construct $\L_{2m}$. 
That is, $\mathcal{J}$ contains each index $j\in \nat$ for which  there is $r_{\ell}$ of the form $r_{\ell} = j(j+d-1)$ (there will be at most $m$ such indices, although the set $\mathcal{J}$ is free to  contain more). 
Thus every kernel $G(\cdot ; r_1,\dots,r_m)$ satisfies an integral identity of order $2m$.

We now show that the polyharmonic kernels can be decomposed as 
linear combinations of  {\em surface splines} (perhaps more accurately called ``restricted surface splines''), which are zonal functions
\[\phi_s(t) := \begin{cases}(1- t)^s\; \log(1-t) & \text{for}\  s\in \nat;\\
                (1- t)^s & s\in \nat-\frac{1}{2} 
              \end{cases}\]
Roughly, these are restrictions to the sphere of a well known family of RBFs: the surface splines,  $|\cdot|^{\beta}$ and $|\cdot|^{\beta} \log |\cdot|$, produce the fundamental solution of the $(\beta +d)/2$-fold Laplacian in $\reals^d$. The zonal kernels considered here are restrictions of such to the sphere, by way of the identity $\frac12 (x-\alpha)^2 = 1 - x\cdot \alpha$.   
Providing this decomposition is important to determining error estimates, because there are precise bounds for the surface splines and their derivatives, especially near the singularity $x=\alpha$.  
For $t\ge0$ it is not difficult to see that there exist constants $\beta_{s,j}$ so that
\begin{equation}\label{phi_diff}
 \left|\phi_s^{(j)} (t)\right|=  
\beta_{s,j}(1-t)^{s - j} \quad\text{for $j>s$}.
\end{equation}

Our investigation of polyharmonic kernels 
begins with observing their expansions in Gegenbauer polynomials. 
The series expansion for $G_m$ follows by Fourier inversion; 
its Fourier coefficients are obtained by reciprocating the 
symbol of the differential operator that $G_m$ inverts:
$$ G_m(x\cdot\alpha) = \sum_{\ell=1}^{\infty} \frac{\ell+\lambda_d}{\omega_d \lambda_d} \prod_{j=1}^m  
[\ell(\ell+d-1)-r_j]^{-1}P_{\ell}^{(\lambda_d)}(x\cdot \alpha).$$  
It is often useful to adopt the notation $\l  := \l(\ell,d) := \ell+\lambda_d$, in which case the Gegenbauer expansion becomes 
\begin{equation}\label{green_expansion}\
 G_m(x\cdot\alpha) = \sum_{\ell=1}^{\infty} \; \frac{\ell+\lambda_d}{\omega_d \lambda_d} \;
\left[\prod_{j =1}^m \frac{1}{[\l^2 - \lambda_d^2]-r_{j}}\right]
P_{\ell}^{(\lambda_d)}(x\cdot \alpha).
\end{equation}

\subsection{Surface Splines}
The series expansion for surface splines is more difficult. It has been studied recently in \cite{BaHu} and \cite{OdLe}. 
These results allow a precise expansion of the kernel in Gegenbauer polynomials.
\begin{lemma}\label{ss_coeff}
For $s\in \nat/2$ satisfying $m:=s+d/2\in \nat$, and $\l=\ell+\lambda_d$, there is a nonzero constant $C_s$ ( depending on $s$ and $d$) such that the Fourier coefficient is
$$
  \widehat{\phi_s}(\ell) 
  = 
  C_{s} 
  \prod_{\nu=1}^m [\l^2 - (\nu- \frac12)^2]^{-1}
$$
for $\ell>s$ when $d$ is even, and for all $\ell$ when $d$ is odd.
\end{lemma}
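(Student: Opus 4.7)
The plan is to compute the Gegenbauer coefficient $a_\ell = \int_{-1}^1 \phi_s(t) P_\ell^{(\lambda_d)}(t)(1-t^2)^{(d-2)/2}\,dt$ and pass to $\widehat{\phi_s}(\ell) = \omega_d \lambda_d a_\ell/(\lambda_d + \ell)$ via the relation established in Section 2. Factoring each difference of squares as $\vec{\ell}^{\,2} - (\nu - 1/2)^2 = (\ell + d/2 - \nu)(\ell + \nu + d/2 - 1)$, the product in the claim telescopes to $\Gamma(\ell + s + d)/\Gamma(\ell - s)$, so it suffices to show that, in the stated range of $\ell$, $\widehat{\phi_s}(\ell)$ equals an $\ell$-independent constant times $\Gamma(\ell - s)/\Gamma(\ell + s + d)$.

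The uniform computational tool is Rodrigues' formula for $P_\ell^{(\lambda_d)}$ combined with $\ell$-fold integration by parts: this expresses $\int_{-1}^1(1-t)^\beta P_\ell^{(\lambda_d)}(t)(1-t^2)^{(d-2)/2}\,dt$ as an explicit Gamma-function formula $a_\ell(\beta)$ containing $1/\Gamma(\beta - \ell + 1)$ as a factor. For $d$ odd ($s \in \nat - 1/2$, no log), setting $\beta = s$ and applying the reflection formula for $\Gamma$ (which, for half-integer $s$, replaces $1/\Gamma(s - \ell + 1)$ by an $\ell$-independent multiple of $\Gamma(\ell - s)$) collapses $a_\ell$ to the required Gamma ratio, valid for every $\ell \geq 0$. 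For $d$ even ($s \in \nat$, with log), write $\phi_s = \tfrac{d}{d\beta}(1-t)^\beta|_{\beta = s}$ and compute $a_\ell^{\log} = \tfrac{d}{d\beta}a_\ell(\beta)|_{\beta = s}$. The key point is that for $\ell > s$, the factor $1/\Gamma(\beta - \ell + 1)$ has a simple zero at $\beta = s$, so in the Leibniz expansion of the $\beta$-derivative only the term differentiating this factor survives, yielding (up to sign) $(\ell - s - 1)! = \Gamma(\ell - s)$ times the remaining regular factors, i.e.\ again the required Gamma ratio. The precise expansions worked out in \cite{BaHu} and \cite{OdLe} carry out this computation in detail.

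The main technical obstacle is the even-$d$ case: one must verify that the digamma-type terms that would \emph{a priori} arise from differentiating the regular factors $\Gamma(\beta + 1)$, $\Gamma(\beta + d/2)$, and $\Gamma(\beta + \ell + d)$ at $\beta = s$ are all killed by the vanishing factor $1/\Gamma(\beta - \ell + 1)|_{\beta = s} = 0$ in the Leibniz expansion. This killing is valid precisely when $\ell > s$; for $\ell \leq s$ the factor $1/\Gamma(s - \ell + 1)$ is itself nonzero, and $\ell$-dependent digamma contributions survive, so the pure product form fails -- this is exactly the source of the restriction on $\ell$ in the statement. Nonvanishing of $C_s$ is immediate from the remaining $\ell$-independent prefactor via the Gamma duplication formula.
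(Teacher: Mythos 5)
Your proposal follows essentially the same route as the paper: both reduce the claim, via the factorization $\l^2-(\nu-\tfrac12)^2=(\ell+\tfrac d2-\nu)(\ell+\tfrac d2+\nu-1)$ and telescoping, to the identity $\widehat{\phi_s}(\ell)=\const\cdot\Gamma(\ell-s)/\Gamma(\ell+s+d)$, and both ground that identity in Baxter--Hubbert \cite{BaHu}. The only real difference is that the paper simply cites \cite{BaHu} (2.12) and (2.20) for the Gamma ratio, whereas you sketch how those formulas are obtained (Rodrigues formula for $P_\ell^{(\lambda_d)}$, $\ell$-fold integration by parts giving a Beta-type integral, the reflection formula for odd $d$, and $\partial_\beta(1-t)^\beta\big|_{\beta=s}$ for even $d$); your explanation of why the restriction $\ell>s$ appears precisely in the even-$d$/log case is the right one. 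One small imprecision in the odd-$d$ step: the reflection formula gives $1/\Gamma(s-\ell+1)=\sin(\pi(\ell-s))\,\Gamma(\ell-s)/\pi$, and the factor $\sin(\pi(\ell-s))=\pm1$ is in fact $\ell$-dependent (alternating in $\ell$); it becomes $\ell$-independent only after cancellation against the $(-1)^\ell$ produced by differentiating $(1-t)^\beta$ $\ell$ times in the integration-by-parts step, so "an $\ell$-independent multiple" is not quite right on its own and should be stated as a cancellation.
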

\begin{proof}
The formula 
$\phi_s(x\cdot\alpha) 
= 
\sum_{\ell=0}^{\infty} 
  \; a_{\ell} \; P_{\ell}^{(\lambda_d)}(x\cdot \alpha)
$ 
holds with 
$$a_{\ell} 
= 
C_{s}\; 
\frac{\ell+\lambda_d}{\omega_d \lambda_d} \; 
\frac{\Gamma(\ell-s)}{\Gamma(s+\ell+d)}
$$  
for $\ell>s$  when $s\in \nat$ by \cite{BaHu}[(2.20)] and for all $\ell$ when $s\in \nat-1/2$ by \cite{BaHu}[(2.12)].
Utilizing the notation 
$\l= \ell+\lambda_d$ and $\s = s+ \lambda_d$ 
(and noting that $\s$ is in $\nat-1/2$, 
since we assume that $s+d/2$ is an integer), 
the factor $\Gamma(\ell-s)/\Gamma(s+\ell+d)$ 
simplifies to 
$\bigl[(\l^2 - (\frac{1}{2})^2)\cdots  (\l^2 - \s^2)\bigr]^{-1}$, 
and the lemma follows.
\end{proof}

Thus, for any positive half-integer $s$, we have the expansion for surface splines: 
 $\phi_s(x\cdot\alpha) 
= 
p(x\cdot \alpha)
+
C_{s}
\sum_{\ell=0}^{\infty}  
 \frac{\ell+\lambda_d}{\omega_d \lambda_d} \; 
\prod_{\nu=1}^m [\l^2 - (\nu- \frac12)^2]^{-1}
  P_{\ell}^{(\lambda_d)}(x\cdot \alpha)
$, 
although the extra polynomial term $p\in \Pi_{s}[-1,1]$ is only needed when $d$ is even.
%
%
\begin{lemma} \label{ss_is_Green} Let $m =  s+d/2$. The kernel 
$(x,\alpha)\mapsto \phi_s(x\cdot \alpha)$ satisfies an integral identity of order $2m$ with operator
\[\L_{2m} = \prod_{j=1}^m \bigl[\Delta - (j-d/2)(j +d/2 -1)\bigr] ,\]
 and $p_f = \sum_{\ell \le s} \sum _{m=1}^{N(d,\ell)} \langle f,Y_{\ell,m}\rangle Y_{\ell,m}$, the projection onto $\Pi_{2m-d}$.
\end{lemma}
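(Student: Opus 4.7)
The plan is to verify the integral identity (\ref{rep}) for every $f \in C^{2m}(\sphere)$ by passing to the spherical harmonic basis and identifying $\phi_s$, up to a nonzero constant, with the inverse of the symbol of $\L_{2m}$. First I would expand $f = \sum_{\ell \ge 0}\sum_m \langle f, Y_{\ell,m}\rangle Y_{\ell,m}$, so that $f - p_f = \sum_{\ell > s}\sum_m \langle f, Y_{\ell,m}\rangle Y_{\ell,m}$; the projection $p_f$ is designed exactly to remove the spherical harmonics on which $\L_{2m}$ acts degenerately.

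Next I would compute the symbol of $\L_{2m}$ on $\mathcal{H}_\ell$. Since $\Delta Y_{\ell,m} = \nu_\ell Y_{\ell,m}$ with $\nu_\ell = \l^2 - \lambda_d^2$, each factor $\Delta - (j-d/2)(j+d/2-1)$ acts on $\mathcal{H}_\ell$ by multiplication by $\nu_\ell - (j-d/2)(j+d/2-1)$. Observing that $j - d/2 = (j - 1/2) - \lambda_d$ and $j + d/2 - 1 = (j - 1/2) + \lambda_d$, one finds $(j-d/2)(j+d/2-1) = (j-1/2)^2 - \lambda_d^2$, so the full symbol simplifies to $\prod_{j=1}^m [\l^2 - (j-1/2)^2]$. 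By Lemma \ref{ss_coeff}, for every $\ell > s$ the Fourier coefficient $\widehat{\phi_s}(\ell)$ is exactly $C_s$ times the reciprocal of this symbol.

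The identity then follows by integrating term-by-term. Via the addition theorem and Lemma \ref{ss_coeff}, $\phi_s(x\cdot\alpha) = p(x\cdot\alpha) + \sum_{\ell > s}\sum_m \widehat{\phi_s}(\ell) Y_{\ell,m}(x) Y_{\ell,m}(\alpha)$, with a polynomial correction $p$ of degree at most $s$ in $x\cdot\alpha$ present only when $d$ is even. Pairing this with $\L_{2m}(f - p_f) = \sum_{\ell > s}\sum_m [\textrm{symbol}(\ell)]\langle f, Y_{\ell,m}\rangle Y_{\ell,m}$, the high-frequency contributions collapse to $C_s \sum_{\ell > s}\sum_m \langle f, Y_{\ell,m}\rangle Y_{\ell,m}(x) = C_s(f - p_f)(x)$, while the polynomial piece drops out by orthogonality (since $\L_{2m}(f - p_f)$ has no spherical harmonic content below degree $s + 1$). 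Absorbing the nonzero constant $C_s$ into the normalization of $\L_{2m}$ then yields (\ref{rep}).

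The main technical point is the alignment between the degeneracies of $\L_{2m}$ and the non-generic range of Lemma \ref{ss_coeff}. Inspection of the symbol $\prod_j [\l^2 - (j - 1/2)^2]$ shows that, on integer $\ell$, it vanishes on exactly $\Pi_s$ when $d$ is even (via $\ell = j - d/2$, $j = d/2, \ldots, m$) and never vanishes when $d$ is odd; this matches precisely the range where $\widehat{\phi_s}$ must be augmented by the polynomial correction, so projecting onto $\Pi_s$ is both necessary and sufficient. Justifying the termwise integration is then routine bookkeeping: $f \in C^{2m}(\sphere)$ ensures $\L_{2m}(f - p_f)$ is continuous, the $\phi_s$ expansion converges absolutely (e.g.\ via Proposition \ref{smoothness} or a direct tail estimate), and Fubini applies.
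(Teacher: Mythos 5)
Your argument is the same as the paper's: the key step in both is rewriting each factor's symbol via $(j-d/2)(j+d/2-1) = (j-\tfrac12)^2 - \lambda_d^2$, so that $\L_{2m}$ has symbol $\prod_{j=1}^m[\l^2 - (j-\tfrac12)^2]$ on $\mathcal{H}_\ell$, which (up to $C_s$) is the reciprocal of $\widehat{\phi_s}(\ell)$ from Lemma~\ref{ss_coeff}, and both arguments observe that the symbol's integer zeros land exactly on $\Pi_s$ when $d$ is even and nowhere when $d$ is odd. You flesh out a few things the paper leaves tacit -- the termwise verification of (\ref{rep}) via the addition theorem, convergence, and the need to absorb the nonzero constant $C_s$ -- but the underlying idea and structure are identical.
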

\begin{proof}
Since the symbol of the Laplacian is $\l^2 -\lambda_d^2$, 
the $\nu^{\mathrm{th}}$ factor in the denominator of the Gegenbauer coefficient of $\phi_s$ 
is
$$
\l^2 - (\nu- \frac12)^2 
= 
\l^2 -\lambda_d^2 - 
\bigl[(\nu- \frac12)^2-\lambda_d^2\bigr]
= 
\l^2 -\lambda_d^2 - (\nu- \frac{d}{2})(\nu + \frac{d}{2}-1).
$$ 
Thus, when $d/2$ is fractional, $\phi_s$ is the
fundamental solution for the invertible differential operator whose symbol is $\prod_{j=1}^m \left[(\l^2-\lambda_d^2)- (j- \frac{d}{2})(j+\frac{d}{2}-1)\right]$, since the eigenvalues of $\Delta$ are integers (the integers $k(k+d-1)$). 
When $d/2$ is integral, the differential operator is invertible on the complement of the space of spherical harmonics of degree less than or equal to $s$.
\end{proof}
\subsection{Surface Spline Expansion of Polyharmonic Kernels}
%
%
\begin{lemma}\label{GreensFns}
For positive integers $m$ and $d$, let $s=m-d/2$. 
The polyharmonic kernel 
$(x,\alpha)\mapsto G_m(x\cdot\alpha;r_1,\dots,r_m)$ 
can be written as
\[G_m(x\cdot\alpha) =  \sum_{j=0}^{J-1} \gamma_j  \, \phi_{s+j}(x\cdot\alpha) + R_{J}(x\cdot\alpha) \]
with $R_{J}\in C^{(J+s-\epsilon)}\bigl([-1,1]\bigr)$.
\end{lemma}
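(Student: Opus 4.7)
The approach is to obtain the decomposition by comparing Gegenbauer expansions, solving for the $\gamma_j$ by matching leading-order asymptotics in $\ell$, and then invoking Proposition \ref{smoothness} to convert coefficient decay into smoothness of $R_J$. By (\ref{green_expansion}), for $\ell \geq 1$ the $\ell$th Gegenbauer coefficient of $G_m$ is proportional to $1/Q_m(\l)$, where $Q_m(\l) := \prod_{j=1}^m [\l^2 - \lambda_d^2 - r_j]$ is a polynomial of degree $m$ in $\l^2$. By Lemma \ref{ss_coeff}, for $\ell$ sufficiently large, the corresponding coefficient of $\phi_{s+j}$ is proportional to $C_{s+j}/P_{m+j}(\l)$, where $P_{m+j}(\l) := \prod_{\nu=1}^{m+j}[\l^2 - (\nu - \tfrac12)^2]$ is a polynomial of degree $m+j$ in $\l^2$. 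The goal is to choose constants $\gamma_j$ so that $1/Q_m(\l)$ agrees with $\sum_{j=0}^{J-1} \gamma_j C_{s+j}/P_{m+j}(\l)$ up to an error of order $\l^{-2(m+J)}$.

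To find the $\gamma_j$, I would introduce the variable $u := 1/\l^2$. Both $P_m(\l)/Q_m(\l)$ and $P_m(\l)/P_{m+j}(\l)$ are analytic in $u$ at $u = 0$: the former equals $1 + O(u)$, while the latter equals $u^j\bigl(1 + O(u)\bigr)$. Since the leading $u$-powers $u^0, u^1, \dots, u^{J-1}$ are distinct, the matching system is triangular with nonzero diagonal, so $\gamma_0$ is chosen to cancel the $u^0$ term, then $\gamma_1$ to cancel $u^1$, and so on inductively. The residual after $J$ steps is $O(u^J)$; dividing by $P_m(\l) = \Theta(\l^{2m})$ yields a discrepancy of $O(\l^{-2(m+J)})$ between the $\ell$th coefficients of $G_m$ and $\sum_j \gamma_j \phi_{s+j}$. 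Applying Proposition \ref{smoothness}, the series $\sum_\ell |\widehat{R_J}(\ell)|\,\ell^{d+2k-1}$ converges whenever $d + 2k - 1 - 2(m+J) < -1$, i.e.\ $k < m + J - d/2 = s + J$, which delivers $R_J \in C^{s+J-\epsilon}\bigl([-1,1]\bigr)$ for any $\epsilon > 0$.

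The main obstacle I anticipate is the caveat in Lemma \ref{ss_coeff}: when $d$ is even, the clean formula for $\widehat{\phi_s}(\ell)$ is valid only for $\ell > s$. Thus for finitely many low-$\ell$ indices the Gegenbauer coefficients of $\sum_j \gamma_j \phi_{s+j}$ deviate from the asymptotic formula by polynomial corrections (since Lemma \ref{ss_is_Green} shows these corrections live in $\Pi_{2m-d}$). These exceptional terms contribute a low-degree polynomial in $x\cdot\alpha$ to $R_J$, which is smooth on $[-1,1]$ and therefore harmless for the smoothness conclusion. A secondary point is ensuring the coefficients $\gamma_j$ are well-defined, which is immediate from the fact that each $P_m/P_{m+j}$ has nonzero leading coefficient in $u$.
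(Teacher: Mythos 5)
Your proposal is correct and follows essentially the same approach as the paper: expanding the Gegenbauer coefficients in asymptotic series in $\l^{-2}$ (your $u = 1/\l^2$ substitution and polynomial-ratio framing merely repackage the paper's Neumann-series expansion), choosing the $\gamma_j$ by a triangular matching to cancel leading terms, and applying Proposition \ref{smoothness} to the remainder coefficient decay $O(\l^{-2(m+J)})$. Your explicit handling of the exceptional low-$\ell$ indices when $d$ is even --- noting they contribute only a smooth polynomial correction to $R_J$ --- is a welcome clarification of a point the paper leaves implicit by restricting to $\ell > s+J$.
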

\begin{proof}
We begin by expanding each of the Fourier coefficients of $G_m$. From (\ref{green_expansion}) we observe that 
$\widehat{G_m}(\ell) = \prod_{j=1}^m (\l^2 - \lambda_d^2 - r_j)^{-1}$. Factoring
$\l^{-2m}$, we have, for  $\ell>\max\bigl(\sqrt{|r_1|},\dots,\sqrt{|r_m|}\bigr)$, that
\[ \widehat{G_m}(\ell) 
= \l^{-2m} \prod_{j=1}^m \left(1 - \frac{\lambda_d^2 + r_j}{\l^2}\right)^{-1}
 =  \l^{-2m} \left(1 + \sum_{n=1}^{\infty}A_n \l^{-2n}\right).
\]
The second equality follow by writing each factor in the product as a Neumann series
(i.e., a series of the form $(1-a)^{-1} = \sum_{j=0}^{\infty} a^j$), and then by multiplying the $m$ series.
We do likewise for the coefficients of $\phi_{s+j}$ (determined in Lemma \ref{ss_coeff}) when $\ell> s+J$: 
\begin{eqnarray*} 
\widehat{\phi_{s+j}}(\ell) &=& 
{C_{s+j}}\l^{-2(m+j)}\left( 1+\sum_{n=1}^{\infty}B_{j+n,j} \l^{-2n}\right).
\end{eqnarray*}
This allows us to choose the coefficients $\gamma_0,\gamma_1,\dots$ in succession, via
 $\gamma_j := C_{s+j}^{-1}(A_j - \sum_{k=0}^{j-1}\gamma_k B_{j,k})$.
 With this choice, the first $J$ terms in the asymptotic expansion of $\widehat{R_J}(\ell)$ are forced to vanish. The fact that each $\gamma_j$ depends only on the previous coefficients $\gamma_k$, $k<j$, is evident from Table \ref{Neumann}.
\begin{table}[ht]
\[  \begin{array}{llrrll}
%
%
\widehat{G_m}(\ell) 
& = 
&\l^{\mbox{}-2m}\bigl(
&1 \, + \, A_{1\phantom{1,}}\l^{\mbox{}-2} 
+ \, A_{2\phantom{1,}}\l^{\mbox{}-4}
+ \, A_{3\phantom{1,}}\l^{\mbox{}-6}
&\!\!\! 
+\,\dots
\bigr)\\
%
%
%
\widehat{\phi_{s}}(\ell)
& = 
&{C_{s}}\l^{\mbox{}-2m}\bigl(
&  
1 \,+\, B_{1,0}\l^{\mbox{}-2}  
+\, B_{2,0}\l^{\mbox{}-4} 
+\, B_{3,0}\l^{\mbox{}-6}
&\!\!\! 
+\,\dots
\bigr)\\
%
%
%
\widehat{\phi_{s+1}}(\ell)
&= 
&C_{s+1}\l^{\mbox{}-2m} \bigl(
&\l^{\mbox{}-2} +\, B_{2,1}\l^{\mbox{}-4} 
+\, B_{3,1}\l^{\mbox{}-6} 
&\!\!\! 
+\,\dots
\bigr)
\\
%
%
%
\widehat{\phi_{s+2}}(\ell)
&= 
&C_{s+2}\l^{\mbox{}-2m}\bigl(
&\l^{\mbox{}-4} 
+\, B_{3,2}\l^{\mbox{}-6} 
&\!\!\!  
+\,\dots
\bigr)
\\
&&\dots&&\\
\end{array}\]
\caption{Expansion of Gegenbauer coefficients}\label{Neumann}
\end{table}
The coefficients of the remainder term are determined to be 
$\widehat{R_J}(\ell) =\widehat{G_m}(\ell) - \sum_{j=0}^{J-1} \gamma_j.$ 
and $$|\widehat{R_J}(\ell)| \le \sum_{k=0}^{\infty} \left|A_{J+k} - \sum_{j=0}^{J-1} \gamma_j B_{J+k-j-1,j}\right|\l^{\mbox{}-2(m+J+k)}\le\const(J)< \infty$$ for sufficiently large $\l$. 
By Proposition \ref{smoothness}, the lemma follows.
\end{proof}
%
%
%

\section{Replacing the Kernels I: Finding the Coefficients}
We now wish to investigate a `coefficient kernel' $\a:\Xi\times \sphere\to \reals$
that will allow us to effectively replace a $\kernel(x\cdot\alpha)$ with  
$\sum_{\xi \in \Xi} \a(\xi,\alpha) \kernel(x\cdot\xi)$
in the representation (\ref{rep}). To
do so, the {\it exchange} $\e_{\kernel}$ given by:
\[\e_{\kernel}(x, \alpha):=|\kernel(x\cdot\alpha) - \sum_{\xi \in \Xi} \a(\xi,\alpha) \kernel(x\cdot\xi)|\] 
must be appropriately small in $L_{\infty}$, and it must decay away from $\alpha = x$. 
The remarkable thing is that this can 
be achieved using only a fixed number of centers near to the singularity. In this section, we develop a technique for choosing coefficients $\a(\xi,\alpha)$ that -- in the following section -- is shown to provide an appropriately small {exchange}.

The two key quantities we need to resolve are the spherical harmonic {\em precision} 
(the degree of 
spherical harmonics reproduced by the coefficient kernel) and the
rate of decay of the error as $\dist(x,\alpha)$ increases. 
As in the Euclidean setting, these are related: the higher the degree of spherical harmonic precision, the more rapidly the {exchange} decays away from the singularity. 


\begin{definition}[CKC] For a set of centers $\Xi \subset \sphere$ the kernel $\a:\Xi\times \sphere \to \reals$  satisfies the Coefficient Kernel Conditions (or CKC) with precision $L$, radius $\rho$ and stability $K$ if 
it is measurable and the following three conditions hold:\\[.5ex]
{\setlength{\parskip}{0.5ex}\vspace{-2.75ex}\begin{description}
\item[\bf CKC 1 (Support)] $\a(\xi,\alpha) = 0$ when $\dist (\xi,\alpha)>\rho$.
\item[\bf CKC 2 (Precision)]
For $S \in \Pi_L$, $\sum_{\xi\in \Xi} \a(\xi,\alpha) S(\xi) = S(\alpha)$
\item[\bf CKC 3 (Stability)] 
 $\max_{\alpha\in\sphere}\sum_{\xi \in \Xi} |\a(\xi,\alpha)|\le K$.
\end{description}}
\end{definition}
Such a local reproduction property always holds for sufficiently dense centers. This is demonstrated in the following lemma.
\begin{lemma}\label{dense}
Given a precision $L$ and centers $\Xi$ having density $h<h_0$ (with $h_0$ determined by
$L$), there exists a coefficient kernel $\a:\Xi\times\sphere\to \reals$ satisfying the CKC with radius $\rho = 48L^2 h$ and stability $K=2$.
\end{lemma}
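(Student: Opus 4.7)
The plan is to exploit the duality between the $\ell_1$-bounded reproduction of a linear functional on $\Pi_L$ and a Marcinkiewicz--Zygmund (MZ) sampling estimate. Fix $\alpha\in\sphere$ and set $\Xi_\alpha:=\Xi\cap C(\alpha,\rho)$ with $\rho=48L^2 h$. By Hahn--Banach (equivalently, by linear-programming duality on the finite-dimensional space $\Pi_L$), the existence of $\{a_\xi\}_{\xi\in\Xi_\alpha}$ satisfying CKC~2 with $\sum_\xi|a_\xi|\le 2$ is equivalent to the dual inequality
\[
|S(\alpha)|\le 2\max_{\xi\in\Xi_\alpha}|S(\xi)|\qquad\text{for every } S\in\Pi_L.
\]
Given this estimate, the weights are produced as optimizers of the corresponding LP; measurability in $\alpha$ arises from a standard measurable-selection argument. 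CKC~1 is built into the support choice, and CKC~3 is the duality bound itself.

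For the dual MZ inequality, I would transfer the problem to the Euclidean setting via the orthogonal projection $C(\alpha,\rho)\to B(0,\sin\rho)\subset T_\alpha\sphere\cong\reals^d$. Every $S\in\Pi_L$ restricted to $C(\alpha,\rho)$ takes the form $p_{\mathrm{e}}(y)+\sqrt{1-|y|^2}\,p_{\mathrm{o}}(y)$ with $p_{\mathrm{e}},p_{\mathrm{o}}$ algebraic polynomials of degree at most $L$. Markov's inequality for polynomials on a ball then supplies a local derivative bound of the form
\[
\|\nabla_{\mathrm{sph}} S\|_{L_\infty(C(\alpha,\rho/2))}\le C\,L^2\rho^{-1}\,\|S\|_{L_\infty(C(\alpha,\rho))}.
\]
Let $x^\ast\in\overline{C(\alpha,\rho/2)}$ attain the maximum of $|S|$ on that half-cap; by the fill-distance hypothesis there is $\xi^\ast\in\Xi_\alpha$ with $\dist(x^\ast,\xi^\ast)\le h$, and the mean value theorem gives
\[
|S(x^\ast)|\le|S(\xi^\ast)|+C\,L^2 h\,\rho^{-1}\,\|S\|_{L_\infty(C(\alpha,\rho))}.
\]
Since $\alpha\in C(\alpha,\rho/2)$ and $\rho=48L^2 h$, the second term on the right is a controlled fraction of $\|S\|_{L_\infty(C(\alpha,\rho))}$; a short nested-cap iteration then transfers this bound from the half-cap up to $\|S\|_{L_\infty(C(\alpha,\rho))}$ itself and then down to the discrete maximum on $\Xi_\alpha$, yielding the required factor of $2$ at $x=\alpha$.

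The main obstacle is the constant tracking: one must chain the local Markov--Bernstein estimate across a finite sequence of nested subcaps (or combine it with a Remez-type absorption step) so that the continuous quantity $\|S\|_{L_\infty(C(\alpha,\rho))}$ on the right-hand side is replaced by $\max_{\xi\in\Xi_\alpha}|S(\xi)|$ while the cumulative expansion factor stays bounded by $2$. The $L^2$ (rather than $L$) scaling in the radius $\rho=48L^2 h$ is the signature of invoking Markov's inequality near the boundary of the cap, and it is precisely what forces the bound $h<h_0=h_0(L)$ to scale like $1/L^2$.
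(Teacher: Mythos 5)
Your duality step is sound and is in fact exactly how the paper proceeds: the paper introduces the sampling operator $R_\alpha : \Pi_L \to (\Pi_L)|_{\Xi_\alpha}$, proves $\|R_\alpha^{-1}\| \le 2$ (Lemma~\ref{sampling}), and then obtains the coefficient functional $\a(\cdot,\alpha)$ by Hahn--Banach extension of $(R_\alpha^{-1})'\delta_\alpha$ to $\bigl(\ell_\infty(\Xi_\alpha)\bigr)' = \ell_1(\Xi_\alpha)$. Your LP-duality formulation is an equivalent packaging of the same idea, and the measurable-selection remark plays the same role as the paper's observation that $\alpha \mapsto R_\alpha^{-1}$ is piecewise Lipschitz.

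The genuine gap is in the sampling inequality, which is the real content of the lemma. You correctly identify the difficulty yourself: having obtained
\[
\|S\|_{L_\infty(C(\alpha,\rho/2))} \le \max_{\xi\in\Xi_\alpha}|S(\xi)| + C L^2 h\,\rho^{-1}\,\|S\|_{L_\infty(C(\alpha,\rho))},
\]
you need to close the loop, and your proposed ``nested-cap iteration'' or ``Remez-type absorption'' does not obviously do so with bounded constants. The issue is quantitative: a doubling estimate of the form $\|S\|_{L_\infty(C(\alpha,\rho))}\le D(L)\|S\|_{L_\infty(C(\alpha,\rho/2))}$ for degree-$L$ harmonics carries $D(L)$ growing \emph{exponentially} in $L$ (Chebyshev/Remez gives $D(L)\sim (2+\sqrt 3)^{L}$ for a factor-2 dilation on an interval). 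Absorbing that factor into the left-hand side would force $\rho$ to scale exponentially in $L$, destroying the $\rho = 48L^2 h$ claim and the accompanying $h_0 \sim L^{-2}$ threshold. Thinning the annuli to width $h$ reduces the doubling constant per step to near $1$, but then one needs $\rho/h \sim L^2$ iterations and the products of per-step constants still need to be tracked against the target $K = 2$; nothing in your sketch controls that product.

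The paper sidesteps this entirely, and the mechanism is worth noting. Instead of Euclidean Markov on a projected ball (which is worst at the boundary and hence necessitates the interior/exterior bookkeeping you describe), Lemma~\ref{sampling} restricts $S$ to great circles and uses Videnski\u\i's Markov inequality for trigonometric polynomials on short arcs, $|q'(\theta)| \le 2L^2\cot(\omega/2)\|q\|_{L_\infty(-\omega,\omega)}$. The key additional ingredient --- absent from your sketch --- is Wendland's cone argument: the data point $\xi$ near the maximizer $x_0$ is not chosen by fill distance alone but inside a cone with vertex $x_0$ pointing \emph{into} the cap, at distance at most $3h$. This guarantees that the length-$\rho$ geodesic through $x_0$ and $\xi$ remains inside $C(\alpha,\rho)$, so that $\|q\|_{L_\infty(-\rho/2,\rho/2)} \le \|S\|_{L_\infty(C(\alpha,\rho))}$. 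No doubling step is needed; the argument closes in one stroke, directly producing $|S(x_0)-S(\xi)|\le \tfrac12\|S\|_{L_\infty(C(\alpha,\rho))}$ and hence $K=2$ with $\rho=48L^2h$. To repair your proof, replace the nested-cap iteration with this cone construction (and preferably the one-dimensional restriction to geodesics, which avoids the parity decomposition $p_\mathrm{e}+\sqrt{1-|y|^2}\,p_\mathrm{o}$ and the attendant issues of applying polynomial Markov inequalities to non-polynomial functions).
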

%
\begin{proof}
Let $\Xi_{\alpha} := \Xi\cap C(\alpha,\rho)$, the set of centers a distance $\rho$ from $\alpha$.
Following what is, by now, a fairly standard technique in scattered data approximation (originally developed for the sphere in \cite{JSW}, and deftly exposited in \cite[Ch. 3]{Wend}), a coefficient kernel is shown to exist if the sampling operator
$$R_{\alpha} :  
\Pi_{L} \to (\Pi_{L})_{|_{\Xi_{\alpha}}}:  
 p \mapsto  p_{|_{\Xi_{\alpha}}}
$$
is boundedly invertible when the domain and range are endowed with the 
$L_{\infty}$ and $\ell_{\infty}$ topologies, respectively. To be precise, we 
must show that the norm of the inverse of the sampling operator is bounded by 2: 
$\|R_{\alpha}^{-1}\| \le 2$, which is accomplished in Lemma \ref{sampling}, below.
Bounded invertibility of $R_{\alpha}$ implies that the norm of the adjoint 
$$
\left(R_{\alpha}^{-1} \right)': 
\Pi' \to   
\left((\Pi_{L})_{|_{\Xi_{\alpha}}}\right)'
$$ 
is similarly bounded. By the Hahn-Banach theorem, there is a norm-bounded extension
of the functional 
$\left(R_{\alpha}^{-1} \right)' \delta_{\alpha}\in \bigl((\Pi_{L})_{|_{\Xi_{\alpha}}}\bigr)'$
 in the space, $\bigl(\ell_{\infty}(\Xi_{\alpha})\bigr)'$.
This can be viewed as an element of $\ell_1\bigl(\Xi_{\alpha}\bigr),$ and, by zero extension,
it is in $\ell_1\bigl(\Xi \bigr).$
We call this sequence $\a(\cdot,\alpha)$ and note that its $\ell_1$ 
norm is bounded by 
$\bigl\| \left(R_{\alpha}^{-1} \right)'\bigr\| \|\delta_{\alpha} \| \le 2$. 

The measurability of the kernel $\a$ is a consequence of its piecewise continuity, which
we now demonstrate.
For each $\upsilon\subset \Xi$, we define the open set
$\Omega_{\upsilon}:=\cap_{\xi\in \upsilon} C(\xi,\rho).$ These can be refined to a (finite) collection of sets
$$\widetilde{\Omega}_{\upsilon}
:=
\Omega_{\upsilon}\setminus \bigcup_{\upsilon\subsetneq \varsigma} \Omega_{\varsigma}$$
that partitions $\sphere$. 
For each $\alpha$ in $\widetilde{\Omega}_{\upsilon}$, 
the sampling operators $R_{\alpha}$ share a common target 
$\Pi_{|_{\upsilon}}$, and the operator valued map
$\alpha \mapsto R_{\alpha}$ is well defined and  Lipschitz. Indeed, 
$\|R_{\alpha}p - R_{\alpha'} p\| \le C |\alpha - \alpha'| \|\nabla p\|_{\infty} \le C_L |\alpha - \alpha'| \|p\|_{L_{\infty}}$
implies that $\|R_{\alpha} - R_{\alpha'}\| \le C_L |\alpha - \alpha'|.$
The inverse is similarly Lipschitz, because
$R_{\alpha}^{-1} - R_{\alpha'}^{-1} = 
R_{\alpha}^{-1}\bigl[ R_{\alpha'} -   R_{\alpha}\bigr] R_{\alpha'}^{-1}.$
For $\alpha \in \widetilde{\Omega}_{\upsilon}$ 
the family of sequences $\a(\cdot,\alpha)$ have their support in $\upsilon$. 
To show that $\alpha \mapsto \a(\cdot,\alpha)$ is continuous we simply observe that
\begin{eqnarray*}
 \|\a(\cdot,\alpha) - \a(\cdot,\alpha')\|_{\ell_1} 
 &\le&
    \left\| R_{\alpha}^{-1}
- R_{\alpha'}^{-1}\right\|
    \|\delta_{\alpha}\| 
    +
    \left\|\bigl(R_{\alpha'}^{-1} \bigr)'\right\| 
    \|\delta_{\alpha'} - \delta_{\alpha}\| \\
 &=&
O(|\alpha - \alpha'|).
 \end{eqnarray*}
\end{proof}

\begin{lemma}\label{sampling} Given a precision $L$ and centers $\Xi$ with density $h<h_0$,  let $\rho =48 L^2$ and  $\Xi_{\alpha} := \Xi\cap C(\alpha,\rho)$ for each $\alpha\in\sphere$.
The sampling operator $R_{\alpha}$ is boundedly invertible on the space of spherical harmonics of degree $L$ or less, and
$$\|p\|_{L_{\infty}(C(\alpha,\rho))} 
\le 
2 \|R_{\alpha} p\|_{\ell_{\infty}(\Xi_{\alpha})}.$$
\end{lemma}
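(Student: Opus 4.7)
The plan is a standard norming-set argument: bound the oscillation of $p$ over the cap via a Markov-type inequality for spherical harmonics, then use the fill-distance hypothesis to replace the maximizer of $|p|$ on $C(\alpha,\rho)$ by a nearby center $\xi\in\Xi_\alpha$.

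Step one is to establish a Markov inequality for spherical harmonics restricted to the cap: for every $p\in\Pi_L$ and every sufficiently small $\rho$,
\[\|\nabla p\|_{L_\infty(C(\alpha,\rho))}\le \frac{C_d\,L^2}{\rho}\,\|p\|_{L_\infty(C(\alpha,\rho))}.\]
I would obtain this by pulling $C(\alpha,\rho)$ back to a Euclidean ball of radius comparable to $\rho$ via the exponential map at $\alpha$. In these coordinates the restriction of $p$ is (up to a smooth nondegenerate change of variables whose Jacobian is $1+O(\rho^2)$) a polynomial of degree $L$ on the ball, so the classical multivariate Markov inequality applies; pushing the gradient bound back to the sphere costs only a bounded dimensional constant.

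Step two is the geometric core. Let $x^*\in\overline{C(\alpha,\rho)}$ be a point where $|p|$ achieves $M:=\|p\|_{L_\infty(C(\alpha,\rho))}$. After a harmless shrinkage (replace $\rho$ by $\rho-h$ throughout, using Step one to compare $L_\infty$-norms on concentric caps of proportional radii, which is legitimate since $h\ll\rho$), we may assume $\dist(x^*,\alpha)\le \rho-h$. The fill-distance assumption then produces $\xi\in\Xi$ with $\dist(x^*,\xi)\le h$, and the triangle inequality places $\xi\in\Xi_\alpha$. The mean-value inequality along the shortest arc from $x^*$ to $\xi$ together with Step one yields
\[|p(x^*)-p(\xi)|\le h\,\|\nabla p\|_{L_\infty(C(\alpha,\rho))}\le \frac{C_d\,L^2\,h}{\rho}\,M=\frac{C_d}{48}\,M,\]
where the last equality uses the prescribed choice $\rho=48L^2h$. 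The constant $48$ is cooked so that $48\ge 2C_d$; hence $|p(\xi)|\ge M/2$, giving $M\le 2|p(\xi)|\le 2\|R_\alpha p\|_{\ell_\infty(\Xi_\alpha)}$ as claimed.

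The main obstacle is Step one: producing the Markov inequality on the cap with the correct $L^2/\rho$ scaling and an explicit dimensional constant small enough to validate the numerical choice $\rho=48L^2h$. The boundary-case shrinkage in Step two is a minor nuisance (it is precisely what forces the hypothesis $h<h_0(L)$), and everything after the gradient estimate is a direct application of the triangle inequality and the mean-value theorem.
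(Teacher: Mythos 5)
Your high-level plan -- a Markov-type bound on the cap followed by the fill-distance/mean-value argument -- is the right shape, and Step two is essentially what the paper does (via Wendland's cone construction to locate $\xi$ with $\dist(x_0,\xi)\le 3h$). But Step one has a genuine gap, and it is the step you flagged as the obstacle.

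The problem is that the pullback of a spherical harmonic via $\exp_\alpha$ is \emph{not} a polynomial in the tangent-plane coordinates. A spherical harmonic of degree $L$ is the restriction to $\sphere$ of a degree-$L$ homogeneous polynomial in $\reals^{d+1}$; composing with $\exp_\alpha(v)=\cos|v|\,\alpha+\sin|v|\,\frac{v}{|v|}$ produces $\cos$ and $\mathrm{sinc}$ factors, not a degree-$L$ polynomial in $v$. Your parenthetical fix -- ``up to a smooth nondegenerate change of variables whose Jacobian is $1+O(\rho^2)$'' -- does not repair this: a near-isometric change of variables controls measures and gradients, but it cannot turn a non-polynomial into a polynomial, so the classical multivariate Markov inequality on a ball simply has nothing to act on. Any salvage along these lines would have to approximate the pullback by a polynomial with errors you control, and then the explicit constant you need to justify $\rho=48L^2h$ is no longer within reach.

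The paper avoids this entirely by going one-dimensional: restrict $p$ to the great circle through $x_0$ and $\xi$. That restriction \emph{is} a genuine trigonometric polynomial of degree $L$, and Videnski{\u\i}'s Markov inequality on an interval shorter than the period, $|\tau'(\theta)|\le 2L^2\cot(\omega/2)\,\|\tau\|_{L_\infty(-\omega,\omega)}$, gives the $L^2/\rho$ scaling with the explicit constant (taking $\omega=\rho/2$, so $\cot(\rho/4)\le 4/\rho$). Combined with the $3h$ bound from the cone argument this yields $|p(x_0)-p(\xi)|\le 24L^2h/\rho\,\|p\|_\infty=\frac12\|p\|_\infty$, exactly what $\rho=48L^2h$ is cooked for. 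I would recommend replacing your Step one by this great-circle restriction; everything in your Step two then goes through essentially unchanged, and you no longer need the concentric-cap shrinkage, since the cone can be aimed so that $\xi$ lands inside $C(\alpha,\rho)$.
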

\begin{proof}
This is accomplished by noting that spherical harmonics, when restricted to great circles, are trigonometric polynomials. From this we can apply the Markov inequality of Videnski\u\i \cite{Vide}, which states that for a trigonometric polynomial, $\tau$ of degree $n$
\begin{equation*}
|\tau'(\theta)| \le 2 n^2 \cot (\omega/2) \|\tau\|_{L_{\infty}(-\omega,\omega)}\quad \text{for $\omega<\pi$, $|\theta|\le \omega$}
\end{equation*}
to control the size of a spherical harmonics having many zeros in a spherical cap.

Select $p\in \Pi_{L}$ and find $x_0$ such that $|p(x_0)| = \|p\|_{L_{\infty}(C(\alpha, \rho))}.$
Following Wendland \cite[p.30]{Wend}, we take $\xi \in \Xi\cap  C(\alpha, \rho)$ so that $\xi$ is in a cone with vertex $x_0$ and distance from $x_0$ less than $h + \frac{h}{\sin\theta}\le 3h$ (this is possible because a cap of radius $h$ with center located at a distance $\frac{h}{\sin \theta}$ from $x_0$ is contained in the cone of aperture $\theta$). Let $\hat{x}_0$ be the terminal point of the geodesic segment starting at $x_0$, passing through $\xi$ and having length $\rho$. Restricting $p$ to this geodesic gives a trigonometric polynomial of degree $L$. Vis., there is $q:[-\pi,\pi] \to \comps$,  $q(\theta) = \sum_{|j|\le \order} a_j e^{ij\theta}$, such that $q(-\rho/2) = p(x_0)$ and $q(\rho/2) = p(\hat{x}_0)$ and
\[|p(x_0) - p(\xi)| \le  \int_{-\rho/2}^{-\rho/2 +|x_0 - \xi|} |q'(t)| \, \dif t.\]
By Videnski\u\i's Markov inequality, $|q'(t)|\le 2 L^2 \cot(\rho/4)\|q\|_{L_{\infty}(-\rho/2,\rho/2)}$,
and, consequently, we have that
\[|p(x_0) - p(\xi)| \le 3h\,\, 2 (L)^2\,\, \frac{4}{\rho} \|p\|_{L_{\infty}(C(\alpha,\rho))}\le \frac{1}{2}\|p\|_{L_{\infty}(C(\alpha,\rho))}.\]
Thus $\|p\|_{L_{\infty}\bigl(C(\alpha,\rho)\bigr)}\le 2 \|p_{|_{\Xi}}\|_{\ell_{\infty}}$ 
and the lemma is proved.
\end{proof}

A consequence of the CKC is that for any $x$ 
and any zonal function
$\k$ that is smooth on the interval $\mathcal{I}_x:=[\min Q_x,\max Q_x]$, where 
$Q_x = \{x\cdot \alpha\}\cup \{x\cdot\xi:\xi\in (\Xi\cap C(\alpha,\rho)\}$,
the exchange can be estimated in terms of the length of the interval
$\mathcal{I}_x$ and the size of derivatives of $\k$ {\em purely on $\mathcal{I}_x$}.  This is the point of the following lemma:
%
%
\begin{lemma}\label{Joe_est_lemma}
Given a coefficient kernel satisfying the CKC with precision $L$, if $\k \in C^{(L+1)}(\mathcal{I}_x),$  then the {exchange} satisfies
\begin{equation}\label{Joe_est}
\e_{\k}(x,\alpha)\le \frac{\|\a(\cdot,\alpha)\|_{\ell_1}}{L\;!} \max_{\xi \in \Xi\cap C(\alpha,\rho)}\, |x \cdot (\alpha - \xi)|^{L+1} \, \max_{t\in \mathcal{I}_x} |{\k}^{(L+1)}(t)| 
\end{equation}
\end{lemma}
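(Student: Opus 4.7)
The plan is the classical Taylor-plus-polynomial-reproduction trick: expand $\k$ in a Taylor polynomial of degree $L$ about the point $t_0 = x\cdot\alpha$, use CKC 2 to annihilate the polynomial part, and bound what remains with the Taylor remainder.

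First I would write, for every $t \in \mathcal{I}_x$,
\[
\k(t) = \sum_{j=0}^{L} \frac{\k^{(j)}(x\cdot\alpha)}{j!}(t-x\cdot\alpha)^{j} + R_L(t),
\qquad
R_L(t) = \frac{1}{L!}\int_{x\cdot\alpha}^{t}(t-s)^{L}\k^{(L+1)}(s)\,\dif s,
\]
so that the crude estimate
\[
|R_L(t)|\le \frac{|t-x\cdot\alpha|^{L+1}}{L!}\,\max_{s\in\mathcal{I}_x}|\k^{(L+1)}(s)|
\]
holds and, crucially, yields the constant $1/L!$ stated in the lemma rather than the sharper $1/(L+1)!$. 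Evaluating at $t=x\cdot\xi$ for each $\xi\in\Xi$, multiplying by $\a(\xi,\alpha)$ and summing gives
\[
\sum_{\xi\in\Xi}\a(\xi,\alpha)\k(x\cdot\xi)
=
\sum_{j=0}^{L}\frac{\k^{(j)}(x\cdot\alpha)}{j!}
\sum_{\xi\in\Xi}\a(\xi,\alpha)(x\cdot\xi - x\cdot\alpha)^{j}
+\sum_{\xi\in\Xi}\a(\xi,\alpha)R_L(x\cdot\xi).
\]

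Next I would kill the polynomial terms. For fixed $x$ and $\alpha$, the map $\xi\mapsto(x\cdot\xi - x\cdot\alpha)^{j}$ is a polynomial of degree $j$ in the ambient coordinates of $\xi$; restricted to $\sphere$ it therefore belongs to $\Pi_{j}\subseteq\Pi_{L}$ for every $0\le j\le L$, since every ambient polynomial of degree $j$ decomposes on $\sphere$ into spherical harmonics of degrees $j,j-2,\dots$. Applying CKC 2 with $S(\xi)=(x\cdot\xi - x\cdot\alpha)^{j}$ produces
\[
\sum_{\xi\in\Xi}\a(\xi,\alpha)(x\cdot\xi - x\cdot\alpha)^{j}
=(x\cdot\alpha - x\cdot\alpha)^{j}
=\begin{cases} 1 & j=0,\\ 0 & 1\le j\le L,\end{cases}
\]
so the polynomial contribution collapses to $\k(x\cdot\alpha)$, and therefore
\[
\e_{\k}(x,\alpha)
=\Bigl|\sum_{\xi\in\Xi}\a(\xi,\alpha)\,R_L(x\cdot\xi)\Bigr|.
\]

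Finally, CKC 1 restricts the sum to $\xi\in\Xi\cap C(\alpha,\rho)$; the triangle inequality combined with the remainder bound (using $t-x\cdot\alpha = x\cdot(\xi-\alpha)$) gives
\[
\e_{\k}(x,\alpha)
\le
\frac{\max_{\xi\in\Xi\cap C(\alpha,\rho)}|x\cdot(\alpha-\xi)|^{L+1}\,\max_{t\in\mathcal{I}_x}|\k^{(L+1)}(t)|}{L!}
\sum_{\xi\in\Xi\cap C(\alpha,\rho)}|\a(\xi,\alpha)|,
\]
and the last sum equals $\|\a(\cdot,\alpha)\|_{\ell_1}$. There is really no obstacle here; the only step demanding any care is the observation that $(x\cdot\xi - x\cdot\alpha)^{j}$, viewed as a function of $\xi$ on $\sphere$, is a bona fide element of $\Pi_L$ for $j\le L$, so that the polynomial-reproduction property of the coefficient kernel is applicable.
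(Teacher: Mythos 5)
Your argument is correct and essentially identical to the paper's: Taylor-expand $\k$ to degree $L$ about $x\cdot\alpha$, use CKC~2 to annihilate the polynomial part, and bound the remainder by Taylor's theorem. The only cosmetic difference is that the paper verifies $\zeta\mapsto q_{L,x\cdot\alpha}(x\cdot\zeta)\in\Pi_L$ by expanding the Taylor polynomial in Gegenbauer polynomials and invoking the addition theorem, whereas you observe directly that $(x\cdot\zeta-x\cdot\alpha)^j$ is an ambient degree-$j$ polynomial in $\zeta$ and hence lies in $\Pi_j\subseteq\Pi_L$ -- the two justifications are equivalent.
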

\begin{proof}
Let both $x$ and $\alpha$ be fixed, set $x \cdot \alpha = t_{\alpha} \in [-1, 1]$ and choose the Taylor polynomial of degree $L$, $q_{L,t_{\alpha}}$, of $\k$ expanded about $t_{\alpha}$. Now $q_{L,t_{\alpha}}$ may be rewritten as a linear combination of Gegenbauer polynomials,
\[
q_{L,t_{\alpha}}(t) =  \sum_{\ell=0}^L \widehat{q_{L,t_{\alpha}}}(\ell) \frac{\ell+\lambda_d}{\omega_d \lambda_d}P_{\ell}^{(\lambda_d)}(t).
\]
Note, furthermore, that 
$q_{L,t_{\alpha}}(x\cdot \alpha)- \sum \a(\xi,\alpha)q_{L,t_{\alpha}}(x\cdot\xi) = 0$ 
by the addition theorem, since $q_{L,t}(x\cdot \zeta) 
= 
\sum_{\ell}\widehat{q_{L,t}}(\ell) 
\sum_m Y_{\ell,m}(x)Y_{\ell,m}(\zeta)$ 
and each 
$Y_{\ell,m}(\zeta)$ 
is annihilated by 
$\mu = \delta_{\alpha}-\sum \a(\xi,\alpha) \delta_{\xi}$.  
Consequently, 
$\e_{\k}(x,\alpha) 
\le 
\sum_{\xi} |\a(\xi,\alpha)| \:|\k(x\cdot \xi) - q_{L,t_{\alpha}}(x\cdot \xi)|$, 
and the 
Taylor's theorem gives: 
\[|\k(t_{\xi}) - q_{L,t_{\alpha}}(t_{\xi})| 
\le \frac{1}{L!} |t_{\xi}- t_{\alpha}|^{L+1} \max_{u\in \mathrm{co}(t_{\xi},t_{\alpha})}|\k^{(L+1)}(u)|.\]
\end{proof}

\section{Replacing the Kernels II: Estimates}
Having found coefficients suitable for replacing the kernel in a representation (\ref{rep}), we now obtain estimates on the {exchange} in an effort to estimate the norm of the operator $E_{\k}:L_p(\sphere)\to L_p(\sphere)$, 
defined by $E_{\k} g(x) = \int \e_{\k}(x,\alpha) g(\alpha)\dif \alpha$. 
The bound, $\|E_{\k}\| := \sup_{0\ne g\in L_p}\|E_{\k}g\|_p/\|g\|_p$  
gives us essentially 
the error 
estimates we desire, since the pointwise error, 
$\mathcal{E}(x) := |f(x) - \int_{\sphere}\sum \a(\xi,\alpha) \k(x\cdot\xi)\L_{2m}f(\alpha) \dif \alpha|$, 
satisfies, by (\ref{rep}), 
\begin{eqnarray*}
\mathcal{E}(x) &=&  
\left|
  \int_{\sphere} 
    \left(\k(x\cdot \alpha) - \sum \a(\xi,\alpha) \k(x\cdot\xi) \right)\L_{2m}f(\alpha) 
  \dif \alpha
\right|\\
&\le& \int_{\sphere} \e_{\k}(x,\alpha) |\L_{2m}f(\alpha)| \dif \alpha.
\end{eqnarray*}
In other words,
$
\|\mathcal{E}\|_p 
\le 
\bigl\|E_{\k} |\L_{2m}f |\bigr\|_{p}
 \le \|E_{\k}\| \|\L_{2m}f\|_p.$
Because of the expansion from  Lemma \ref{GreensFns}, we focus on obtaining the estimates for surface splines first, before moving to polyharmonic functions in general. 
%
%
\begin{lemma}\label{ss_loc}
Let $m=s+d/2$. Assume $\a$ is a coefficient kernel satisfying the CKC with radius $\rho$,  precision $2m$ and stability $K$. Then for 
$j\in \mathbb{N}$
the {exchange} of the kernel $\phi_{s+j}$ satisfies 
\begin{equation*}
\e_{\phi_{s+j}}(x,\alpha) \le \const(K,m,j,d)\,
\rho^{2(m+j)-d}\left(1+ \frac{\dist (x,\alpha)}{\rho}\right)^{2j-d-1}
\end{equation*}
\end{lemma}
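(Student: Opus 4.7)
My plan is to split on $D := \dist(x,\alpha)$ versus $\rho$, applying Lemma~\ref{Joe_est_lemma} in the far regime and using a direct pointwise bound combined with CKC precision in the near regime. Two elementary geometric inputs will be invoked throughout: for every $\xi \in \Xi \cap C(\alpha,\rho)$, the product-to-sum identity together with $|\dist(x,\xi)-D|\le\rho$ gives $|x\cdot(\alpha-\xi)| = 2|\sin\tfrac{\dist(x,\xi)+D}{2}||\sin\tfrac{\dist(x,\xi)-D}{2}| \le C\rho(D+\rho)$, and the identity $1 - x\cdot\zeta = 2\sin^2(\dist(x,\zeta)/2)$ places $\mathcal{I}_x \subset [1 - C(D+\rho)^2, 1]$ for every $\zeta \in \{\alpha\}\cup\Xi_\alpha$. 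I work in the principal range $s+j \le 2m$, where (\ref{phi_diff}) applies directly without a logarithmic correction in the derivative.

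In the far regime $D \ge 2\rho$, every $\zeta$ in the stencil satisfies $\dist(x,\zeta) \ge D/2$, so $1-u \ge cD^2$ uniformly on $\mathcal{I}_x$. Formula (\ref{phi_diff}) then gives $\max_{\mathcal{I}_x}|\phi_{s+j}^{(2m+1)}| \le C D^{2(s+j-2m-1)}$, and Lemma~\ref{Joe_est_lemma} combined with the geometric bound yields
\[
\e_{\phi_{s+j}}(x,\alpha) \le \frac{K}{(2m)!}\bigl[C\rho(D+\rho)\bigr]^{2m+1}\,CD^{2(s+j-2m-1)} \le C\rho^{2m+1}D^{2j-d-1},
\]
using $2s = 2m - d$ in the last step. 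This equals $C\rho^{2(m+j)-d}(D/\rho)^{2j-d-1}$ and is bounded by the target since $(D/\rho)^{2j-d-1} \le C(1+D/\rho)^{2j-d-1}$ whenever $D \ge 2\rho$.

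In the near regime $D < 2\rho$, the singularity of $\phi_{s+j}^{(2m+1)}$ at $t=1$ rules out Lemma~\ref{Joe_est_lemma}, so I fall back on $\e_{\phi_{s+j}}(x,\alpha) \le (1+K)\max_{\mathcal{I}_x}|\phi_{s+j}|$. Since $(1+D/\rho)^{2j-d-1}$ is bounded by a constant in this regime, the target reduces to $\e_{\phi_{s+j}} \le C\rho^{2(s+j)}$. For half-integer $s+j$ this is immediate from $|\phi_{s+j}(t)| = (1-t)^{s+j} \le C\rho^{2(s+j)}$ on $\mathcal{I}_x$. For integer $s+j$, where $\phi_{s+j}(t) = (1-t)^{s+j}\log(1-t)$ would naively lose a $|\log\rho|$ factor, I isolate the log-free piece via
\[
\phi_{s+j}(t) = (1-t)^{s+j}\log\!\bigl((1-t)/\rho^2\bigr) + 2\log(\rho)(1-t)^{s+j}.
\]
After the rescaling $w = (1-t)/\rho^2 \in [0,C]$, the first summand is uniformly $O(\rho^{2(s+j)})$ on $\mathcal{I}_x$ because $w \mapsto w^{s+j}|\log w|$ is bounded on $[0,C]$; the second summand is a scalar multiple of the polynomial $(1-t)^{s+j}$ of degree $s+j \le 2m$, and is therefore annihilated by the exchange via CKC precision and the addition theorem.

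The delicate step that I expect to be the main obstacle is precisely this cancellation of the logarithmic piece in the integer case: the decomposition above works only because the polynomial multiplier $(1-t)^{s+j}$ has degree at most $2m$, exactly matching the CKC precision. The hypothesis $m > d/2$ (so that $s = m - d/2$ is non-negative) enters here, as does the degree-matching condition $s+j \le 2m$, and verifying that the rescaled remainder $w^{s+j}|\log w|$ is indeed uniformly controlled on $[0,C]$ is the sharpest ingredient linking the stated hypotheses of the lemma to its conclusion.
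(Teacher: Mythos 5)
Your proof is correct and follows the same two essential ideas as the paper's: the log-splitting trick near the singularity (with the polynomial piece annihilated by CKC precision via the addition theorem) and Taylor-remainder control via Lemma~\ref{Joe_est_lemma} away from it. The genuine difference is in the far regime. The paper's proof partitions into \emph{three} regions -- a cap $\Omega_2=\{\dist(x,\alpha)\le\rho\}$, a band $\Omega_3=\{\rho<\dist(x,\alpha)\le\pi/2\}$ handled by a dyadic decomposition with the discrete implication (\ref{diff_dot}), and the southern hemisphere $\Omega_1$ treated crudely with the uniform bound $|x\cdot(\alpha-\xi)|\le\rho$. You replace $\Omega_1\cup\Omega_3$ by the single continuous estimate $|x\cdot(\alpha-\xi)|\le C\rho(D+\rho)$ coming from the product-to-sum identity, together with $1-t\ge cD^2$ on $\mathcal{I}_x$, and obtain the bound $C\rho^{2m+1}D^{2j-d-1}$ in one stroke for all $D\ge 2\rho$. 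This is a tidier argument: it avoids the dyadic bookkeeping, avoids the separate hemisphere case, and makes visible exactly where the exponent $2j-d-1$ comes from (the cancellation $2(s+j)-2(2m+1)+(2m+1)=2j-d-1$ using $d=2m-2s$). The paper's dyadic version buys nothing extra here; your continuous version is cleaner and equivalent. Your explicit restriction to the ``principal range'' $s+j\le 2m$ is the right one: it is what Theorem~\ref{Main} needs, and it is also implicitly required in the paper's argument (both for (\ref{phi_diff}) to apply to $\phi^{(2m+1)}_{s+j}$ and for the degree-$(s+j)$ polynomial to lie in $\Pi_{2m}$), even though the lemma is stated loosely ``for $j\in\mathbb{N}$.'' Your emphasis on $s>0$ guaranteeing boundedness of $w\mapsto w^{s+j}|\log w|$ is also exactly the right sanity check.
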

\begin{proof}
 We consider three regions, for a  fixed `north pole' $\alpha\in\sphere$:\\[.5ex]
 {\setlength{\parskip}{0.5ex}\vspace{-2.75ex}
\begin{itemize} 
\item[] $\Omega_1 :=\{x\mid\pi/2<\dist(x,\alpha)\le \pi\}$, where the surface spline is smooth;
\item[] $\Omega_2 := \{x\mid0<\dist(x,\alpha)\le \rho\}$, a cap of radius $\rho$ near the north pole;
\item[] $\Omega_3 := \{x\mid\rho<\dist(x,\alpha)\le\pi/2\}$: a band 
where high order derivatives decay.
\end{itemize}}

{\bf  $\boldsymbol{\Omega_1}$ :} 
We note that outside the spherical cap 
$C(\alpha,\pi/2)$ the $(2m+1)^{\mathrm{st}}$ derivatives of $\phi_{s+j}$ are
bounded by $\beta_{s+j,2m+1}$, and we can use (\ref{Joe_est}) to obtain 
$\e_{\phi_{s+j}}(x, \alpha) \le \rho^{\val+1}  \frac{K}{2m!}\beta_{s+j,2m+1}$.

{\bf  $\boldsymbol{\Omega_2}$ :} 
In the cap nearest to $\alpha$,
we use the fact that $\phi_{s+j}$ has a high order zero. 
Here we need a relationship (used later, as well) between the geodesic distance of two points and their inner product
\begin{equation}
1-\frac{1}{2}\left(\dist(x,\zeta)\right)^2\le x\cdot \zeta\le 1-\frac{4}{\pi^2}\left(\dist(x,\zeta)\right)^2\label{dot}\quad\text{for }\mathrm{dist}(x,\zeta)\le \pi/2.
\end{equation}
For even $d$, the proof is complicated by the $\log$ factor, so we consider this case only, as the odd case follows by a similar but much simpler argument. 
We proceed by writing 
\begin{eqnarray*}
&\mbox{}& |1-x\cdot \zeta|^{s+j} \log|1-x\cdot \zeta| \\
&=&\rho^{2(s+j)}\left|\frac{1-x\cdot \zeta}{\rho^2}\right|^{s+j} \log|\rho^2|+ 
\rho^{2(s+j)}\left|\frac{1-x\cdot \zeta}{\rho^2}\right|^{s+j}
\log\left|\frac{1-x\cdot \zeta}{\rho^2}\right|
\end{eqnarray*}
Since $s$ is even when $d$ is even, the first term is simply a spherical harmonic in $\zeta$ (by the addition theorem), of degree $s+j\le2m$, and is therefore annihilated by the functional $\mu = \delta_{\alpha} - \sum_{\xi \in \Xi} \a(\xi,\alpha) \delta_{\xi}.$ Thus, we need only apply $\mu$ to the second term; we obtain (by the left hand side of (\ref{dot})),
\begin{eqnarray*}
\e_{\phi_{s+j}}(x,\alpha)&\le&
 (1+K)\rho^{2(s+j)}\max_{\zeta\in C(\alpha,3\rho)} \left|\frac{1-x\cdot \zeta}{\rho^2}\right|^{s+j}\log\left|\frac{1-x\cdot \zeta}{\rho^2}\right| \\
&\le &
\rho^{2(s+j)}(1+K)
\left[ \max_{0\le t\le 9/2} \left| t\right|^{s+j} \log\left|t\right| \right]
\end{eqnarray*}

{\bf $\boldsymbol{\Omega_3}$ :} 
The estimate (\ref{dot})  bounds the derivatives of $\phi_{s+j}$,  
but in the northern hemisphere, we can achieve better estimates for $|x\cdot (\xi-\alpha)|$, in the sense that this inner product becomes considerably smaller than $\rho$  when $x$ and $\alpha$ are close. 
Decompose
 $C(\alpha,\kappa) \setminus C(\alpha,2\rho)$ {\em en annuli}, and note that
\begin{equation}\label{diff_dot}
\dist (x,\alpha) \le 2^{k} \rho  \quad \Rightarrow \quad |x\cdot(\alpha - \xi)|\le 2^{k+1} \rho^2.
\end{equation}
For $2^{k-1}\rho\le \dist (x,\alpha)\le 2^k\rho$ we
estimate 
$\max {\mathcal{I}}_{x}\le 1- \frac{4}{\pi^2}(2^{k-1}\rho)^2$ by (\ref{dot}) to obtain bounds on the $(2m+1)^{\mathrm{th}}$  derivatives of $\phi_{s+j}$ on $\mathcal{I}_x$.  
On the other hand, we can apply (\ref{diff_dot})
to estimate $|x\cdot(\alpha-\xi)|$. Thus,
\begin{eqnarray*}
\e_{\phi_{s+j}}(x,\alpha)
&\le& 
\frac{K}{2m!} \beta_{s+j,2m+1} \, (2^{k+1} \rho^2)^{\miniorder+1} 
\left(\frac{4}{\pi^2}(2^{k-1}\rho)^2\right)^{s+j-\miniorder-1}\\
& =&   
\const(K,m,j,d) \left(2^{k}\right)^{-d-1+2j} \rho^{2s+2j}
\end{eqnarray*}
The first inequality follows from (\ref{Joe_est}) using (\ref{phi_diff}),(\ref{dot}) and (\ref{diff_dot}) while
the second inequality is a consequence of the fact that
$d = 2m -2s$.
\end{proof} 

\section{Main Results}
We are now in a position to prove our main results, that polyharmonic kernels 
and surface splines deliver $L_p$ approximation orders commensurate with the $L_p$ smoothness of the target function, at least up to a (putative) `saturation order': the order of the differential operator that the kernel inverts. We begin by giving `high order' results, for functions of `full' smoothness. Afterwards, we give the lower orders and the corresponding smoothness spaces by means of real interpolation.
\begin{theorem}\label{Main}
Assume the coefficient kernel $\a:\Xi\times \sphere\to \reals$ satisfies CKC with radius $\rho$, precision $2m$ and stability $K$. Assume, further, that the kernel $\k$  provides an integral identity (\ref{rep}) of order $2m$ and can be decomposed as
$\k = \sum_{j = 0}^{2m-s} \gamma_{j} \phi_{s+j} + R$, 
with $s= m - d/2$ and 
remainder $R\in C^{(2m)}[-1,1]$. Then for $f\in W_p^{2m}(\sphere)$, if $1\le p<\infty$, or for $f\in C^{2m}(\sphere)$ when $p=\infty$,
the approximant
\[
T_{\Xi} f(x) 
= 
p_{f}(x) +\sum_{\xi \in \Xi}  A_{\xi} \k(x\cdot\xi),
\]
with coefficients 
$A_{\xi} = 
\int_{\sphere} \L_{2m} (f-p_f)(\alpha)\; \a(\xi,\alpha)  \dif \alpha,
$
$\xi \in \Xi$,
converges to $f$ in $L_p(\sphere)$ with error:
\[\|f - T_{\Xi}f\|_{L_p(\sphere)} \le 
\const(K,\k) \rho^{2m} \|f\|_{W_p^{2m}(\sphere)}\]
 and with coefficients satisfying
$\|A\|_{\ell_1(\Xi)} \le \const(K,\k) \|f\|_{W_p^{2m}(\sphere)}.$
\end{theorem}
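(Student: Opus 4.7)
The plan is to use the integral identity (\ref{rep}) to rewrite $f - T_\Xi f$ as the image of $\L_{2m}(f-p_f)$ under an integral operator whose kernel is exactly the exchange $\e_\k$, and then bound that operator on $L_p$ by Schur's test. The kernel estimate is reduced, via the hypothesized decomposition $\k = \sum_j \gamma_j \phi_{s+j} + R$, to Lemma \ref{ss_loc} for the surface spline pieces and Lemma \ref{Joe_est_lemma} for the smooth remainder.

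Substituting the definition of $A_\xi = \int_\sphere \L_{2m}(f-p_f)(\alpha)\,\a(\xi,\alpha)\, d\alpha$ into $T_\Xi f$ and interchanging the (finite) sum with the integral, the identity (\ref{rep}) yields
\[
f(x) - T_\Xi f(x) = \int_\sphere \Bigl(\k(x\cdot \alpha) - \sum_{\xi \in \Xi} \a(\xi,\alpha)\k(x\cdot\xi)\Bigr)\L_{2m}(f-p_f)(\alpha)\, d\alpha,
\]
so $|f(x) - T_\Xi f(x)| \le E_\k\bigl(|\L_{2m}(f-p_f)|\bigr)(x)$. Because $\L_{2m} = \prod_j(\Delta - r_j)$ maps $W_p^{2m}(\sphere)$ boundedly into $L_p(\sphere)$ (iterating the mapping property of $\Delta$ from Section \ref{Sec:Background}) and $p_f$ lives in a finite-dimensional space, the theorem reduces to proving $\|E_\k\|_{L_p \to L_p} \le C\rho^{2m}$, uniformly in $1 \le p \le \infty$. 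Since the exchange bounds I obtain below depend on $x$ and $\alpha$ only through $\dist(x,\alpha)$, the two Schur sums $\sup_x \int \e_\k(x,\alpha)\, d\alpha$ and $\sup_\alpha \int \e_\k(x,\alpha)\, dx$ coincide, and Schur's test reduces the operator bound to estimating just one of them by $C\rho^{2m}$.

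Splitting $\e_\k \le \sum_{j=0}^{2m-s}|\gamma_j|\e_{\phi_{s+j}} + \e_R$, Lemma \ref{ss_loc} supplies $\e_{\phi_{s+j}}(x,\alpha) \le C\rho^{2(m+j)-d}(1+\dist(x,\alpha)/\rho)^{2j-d-1}$; passing to geodesic polar coordinates about $x$, using $\sin^{d-1}(r) \le r^{d-1}$, and substituting $u = r/\rho$ yields
\[
\int_\sphere \e_{\phi_{s+j}}(x,\alpha)\, d\alpha \le C\rho^{2(m+j)}\int_0^{\pi/\rho}(1+u)^{2j-d-1}u^{d-1}\, du.
\]
For $j=0$ the integral converges as a Beta integral and the bound is $C\rho^{2m}$; for $j\ge 1$ the integrand is $\sim u^{2j-2}$ near infinity and the truncation at $\pi/\rho$ contributes a factor of order $\rho^{-(2j-1)}$, producing the strictly better bound $C\rho^{2m+1}$. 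For the remainder, which lies in $C^{2m}$, Lemma \ref{Joe_est_lemma} is applied with precision $L = 2m-1$ (implied by the assumed CKC precision $2m$), together with $|x\cdot(\alpha-\xi)| \le |\alpha-\xi| \le \rho$ for $\xi \in C(\alpha,\rho)$, giving $\e_R(x,\alpha) \le C\rho^{2m}$ pointwise and hence the same bound on its Schur sum. Summing, $\sup_x \int_\sphere \e_\k(x,\alpha)\, d\alpha \le C\rho^{2m}$, which completes the $L_p$ bound. The $\ell_1$ bound on the coefficients then follows at once from CKC~3 and H\"older's inequality:
\[
\|A\|_{\ell_1(\Xi)} \le \int_\sphere|\L_{2m}(f-p_f)(\alpha)|\sum_{\xi\in\Xi}|\a(\xi,\alpha)|\, d\alpha \le K\,\omega_d^{1-1/p}\|\L_{2m}(f-p_f)\|_{L_p}.
\]

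The main obstacle I expect is the bookkeeping of the Schur integral for large $j$: when $2j-d-1 \ge 0$ the Lemma \ref{ss_loc} bound actually \emph{grows} with $\dist(x,\alpha)$, so one has to balance this growth against the prefactor $\rho^{2(m+j)-d}$ and use the compactness of $\sphere$ (the cutoff at $r = \pi$) to still recover the $\rho^{2m}$ rate; the upshot is that the $j=0$ term is the sole dominant one, while every other term contributes the improved $\rho^{2m+1}$. A subsidiary subtlety is that uniformity in $p$ rests on Schur's test, which requires symmetry of the kernel bounds in $(x,\alpha)$; this symmetry is not evident in $\e_\k$ itself but is inherent in the estimates of Lemma \ref{ss_loc} and Lemma \ref{Joe_est_lemma}, which depend on $x$ and $\alpha$ only through $\dist(x,\alpha)$.
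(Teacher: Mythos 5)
Your proof is correct and follows essentially the same route as the paper: you reduce the error to the operator norm of $E_\k$, bound it via the Schur sum $\sup_\alpha\int\e_\k(x,\alpha)\,dx$ (equivalent to the paper's $L_1$–$L_\infty$ interpolation, since the estimates from Lemma \ref{ss_loc} and Lemma \ref{Joe_est_lemma} depend only on $\dist(x,\alpha)$), decompose $\e_\k$ via the hypothesized surface-spline expansion, carry out the same polar-coordinate computation with the $j=0$ term dominating, and obtain the $\ell_1$ coefficient bound from CKC~3. Your explicit remark that the remainder estimate uses precision $L=2m-1$ (forced by $R\in C^{2m}$) is a detail the paper leaves implicit, and is worth keeping.
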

The decomposition, $\k = \sum_{j = 0}^{2m-s} \gamma_{j} \phi_{s+j} + R,$ means that this result holds for surface splines themselves, and by Lemma \ref{GreensFns} it holds for polyharmonic kernels $G_m$ as well.
\begin{proof}
We begin by estimating the operator norm of $E_{\k}$. 
To do this for $1\le p\le \infty$, 
we simply find estimates for $\|E_{\k}\|_{1\to1}$ 
and $\|E_{\k}\|_{\infty\to \infty}$, 
obtaining the $\|E_{\k}\|_{p\to p}$ 
norm by interpolation.
By symmetry, both the $L_1$ and $L_{\infty}$ operator norms are bounded by 
$\sup_{\alpha \in \sphere} \int_{\sphere} |\e_{\k}(x, \alpha)|\dif x.$ 

The decomposition of the kernel 
permits us to estimate this integral as the sum of the constituent integrals $\int|\e_{\phi_{s+j}}(x, \alpha)|\dif x$, for $j=0 \dots {2m-s}$ and  
$\int|\e_{R}(x, \alpha)|\dif x$. 
The latter can be estimated using Lemma \ref{Joe_est_lemma} directly: 
$\int_x |\e_{R}(x, \alpha)|\dif x \le \frac{K}{2m!}\omega_d \rho^{2m} \|R^{(2m)}\|_{L_{\infty}[-1,1]}$ .
The integrals of the kernels $\e_{\phi_{s+j}}(x, \alpha) $ are estimated by splitting the sphere into the  southern hemisphere, $\Omega_1$, and northern hemisphere, $\Omega_1^{c}$ . 
By Lemma \ref{ss_loc},  $\e_{\phi_{s+j}}$  is bounded uniformly over 
$\Omega_1$  by $C \rho^{2m+1}$, so  
$\int_{ \Omega_1} |\e_{\phi_{s+j}}(x, \alpha)|\dif \alpha \le C \rho^{2m+1}$.

On $\Omega_1^{c}$ we integrate using polar coordinates, obtaining:
\begin{eqnarray*} 
\int_{\Omega_1^{c}} 
  |\e_{\phi_{s+j}}(x, \alpha)|\,
\dif \alpha
&\le&
C
\int_{\Omega_1^{c}}
  \rho^{2(m+j)-d} \left(1+\frac{\dist(x,\alpha)}{\rho}\right)^{2j-d-1} 
\dif \alpha\\
&\le & 
C \rho^{2(m+j)}
\left(  1+ \int_{1}^{\pi/(2\rho)} R^{2j-2}\dif R\right)\\
&\le&   
C \rho^{2(m+j)} 
\left(1 + \left(\frac{\pi}{2} \right)^{2j-1} 
\rho^{1-2j}\right) 
\le 
C\rho^{2m}.
\end{eqnarray*}
 To bound the coefficients, we make the estimate 
\[
\sum_{\xi\in\Xi} |A_{\xi}|
\le 
\sum_{\xi\in\Xi} 
\int |\a(\alpha,\xi)| 
  |\L_{2m}(f- p_f)(\alpha)| \,
\dif \alpha .
\]
This is less than
\begin{eqnarray*}
\int 
\sum_{\xi\in\Xi} 
  |\a(\alpha,\xi)| |\L_{2m}(f- p_f)(\alpha)|\, 
\dif \alpha 
&\le&
K \int_{\sphere} |\L_{2m}(f- p_f)(\alpha)|\, \dif \alpha\\
&\le& C K \|f\|_{W_p^{2m}}.
\end{eqnarray*}
\end{proof}
The previous theorem requires the target function to have $2m$ derivatives in $L_p$, which is quite restrictive. To treat more general functions, we can first approximate a target function of lower smoothness by a nearby member, $g$, of $W_p^{2m}$, and apply
the theorem to $g$ instead of $f$. This is an old trick in approximation theory, and it is a consequence of the fact that the Besov spaces are interpolation spaces of Sobolev spaces. 
We make use of the Besov spaces $B_{p,\infty}^{\sigma}$, $1\le p<\infty$ and $0<\sigma < 2m$, which are the spaces of $L_p$ functions with norm  
\[\|f\|_{B_{p,\infty}^{\sigma}(\sphere)}:=\sup_{t>0}\left( t^{-\frac{\sigma}{2m}}\inf \left\{\|f-g\|_p + t \|g\|_{W_p^{2m}}: g\in W_p^{2m}(\sphere)\right\}\right).\]
When $p = \infty$, the norm can be rewritten with $C^{2m}$ replacing $W_p^{2m}$.
Rather than paraphrase the theory here, we point the interested reader to \cite[Chapters 1 and 7]{Tri} for the pertinent theorems and definitions.
%
%
\begin{corollary}
In the setting of the previous theorem, 
if $f\in B_{p,\infty}^{\sigma}(\sphere)$ for $1\le p\le\infty$ with $0<\sigma<2m$ 
then 
$\dist(f,S(\k,\Xi))_p 
\le 
\const \rho^{\sigma} 
\|f\|_{B_{p,\infty}^{\sigma}}$, 
and this can be accomplished with an approximant 
\[s_{\xi,f}(x) = \sum_{\xi\in \Xi} A_{\xi} \k(x\cdot \xi) + p(\xi),\]
with $p\in \Pi_{\mathcal{J}}$ and with coefficients satisfying
$\|A\|_{\ell_1(\Xi)} \le C \rho^{\sigma - 2m}\|f\|_{B_{p,\infty}^{\sigma}(\sphere)}.$
\end{corollary}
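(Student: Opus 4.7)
The plan is to obtain the corollary by the standard real-interpolation K-functional argument, using Theorem \ref{Main} on a smooth surrogate for $f$ chosen by the very infimum that defines the Besov norm.

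First I would recall that the definition of $\|f\|_{B_{p,\infty}^\sigma}$ as a supremum of $t^{-\sigma/(2m)}K(f,t)$, where $K(f,t)=\inf\{\|f-g\|_p+t\|g\|_{W_p^{2m}}:g\in W_p^{2m}(\sphere)\}$, is exactly the Peetre K-functional description of the real interpolation space between $L_p(\sphere)$ and $W_p^{2m}(\sphere)$ with parameters $(\sigma/(2m),\infty)$. Consequently, for every $t>0$ there exists $g_t\in W_p^{2m}(\sphere)$ with
\[
\|f-g_t\|_p + t\,\|g_t\|_{W_p^{2m}} \le 2\, t^{\sigma/(2m)}\,\|f\|_{B_{p,\infty}^\sigma}.
\]

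Next I would set $t:=\rho^{2m}$ and apply Theorem \ref{Main} to $g_t$. This produces an approximant $T_\Xi g_t\in S(\k,\Xi)$ of the form $p_{g_t}+\sum_{\xi\in\Xi}A_\xi \k(\cdot\,,\xi)$ with
\[
\|g_t-T_\Xi g_t\|_p \le \const(K,\k)\,\rho^{2m}\,\|g_t\|_{W_p^{2m}},
\qquad
\|A\|_{\ell_1(\Xi)} \le \const(K,\k)\,\|g_t\|_{W_p^{2m}}.
\]
Taking $s_{f,\Xi}:=T_\Xi g_t$ and estimating by the triangle inequality,
\[
\|f-s_{f,\Xi}\|_p \le \|f-g_t\|_p + \|g_t-T_\Xi g_t\|_p \le \|f-g_t\|_p + \const\,\rho^{2m}\|g_t\|_{W_p^{2m}},
\]
the right-hand side is bounded, by the K-functional estimate with $t=\rho^{2m}$, by $\const\,\rho^\sigma\|f\|_{B_{p,\infty}^\sigma}$. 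The coefficient estimate is immediate as well: from the K-functional inequality one has $\|g_t\|_{W_p^{2m}}\le 2\rho^{\sigma-2m}\|f\|_{B_{p,\infty}^\sigma}$, so $\|A\|_{\ell_1(\Xi)}\le \const\,\rho^{\sigma-2m}\|f\|_{B_{p,\infty}^\sigma}$, as claimed.

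There is no real obstacle — the entire weight of the argument has been absorbed into Theorem \ref{Main}. The only item worth a line of care is that when $p=\infty$ one must use $C^{2m}(\sphere)$ in the K-functional (as noted in the paragraph preceding the corollary) and invoke the $p=\infty$ version of Theorem \ref{Main}; the argument is otherwise identical. The polynomial term $p_{g_t}\in\Pi_{\mathcal{J}}$ depends on $g_t$ and hence on $\rho$, but this is permitted since the approximant is allowed to include an arbitrary element of $\Pi_{\mathcal{J}}$.
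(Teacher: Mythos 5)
Your proposal is correct and follows essentially the same route as the paper: use the $K$-functional formulation of the Besov norm to find a near-optimal $g\in W_p^{2m}$ at scale $t=\rho^{2m}$, apply Theorem \ref{Main} to $g$, and finish by the triangle inequality, with the coefficient bound falling out of $\|g\|_{W_p^{2m}}\le 2\rho^{\sigma-2m}\|f\|_{B_{p,\infty}^\sigma}$. Your added remarks on the $p=\infty$ case and on the $\rho$-dependence of the polynomial term are accurate but do not change the argument.
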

\begin{proof}
By real interpolation, we have, for every $t>0$, that 
$\inf 
\{\|f-g\|_p + t \|g\|_{W_p^{2m}}: 
g\in W_p^{2m}(\sphere)\} 
\le 
t^{\frac{\sigma}{2m}}
\|f\|_{B_{p,\infty}^{\sigma}(\sphere)}$. 
This implies, taking $t=\rho^{2m}$, that we can find 
$g_{\rho} \in W_p^{2m}(\sphere)$ satisfying 
\begin{eqnarray*} 
\|f-g_{\rho}\|_p &\le& 2 \rho^{\sigma}\|f\|_{B_{p,\infty}^{\sigma}(\sphere)};\label{Kfunctional_a}\\
\|g_{\rho}\|_{ W_p^{2m}(\sphere)} 
&\le& 2 \rho^{\sigma-2m}
\|f\|_{B_{p,\infty}^{\sigma}(\sphere)}\label{Kfunctional_b}
 \end{eqnarray*}
Applying the the previous theorem to $g_{\rho}$ gives
$\|f - s_{\Xi}g\|_p \le \|f-g\|_p + \|g-s_{\Xi}g\|_p \le 2 \rho^{\sigma}\|f\|_{B_{p,\infty}^{\sigma}(\sphere)}
 +\const \rho^{2m} \rho^{\sigma-2m}
\|f\|_{B_{p,\infty}^{\sigma}(\sphere)}.$
The coefficient estimate follows by a similar argument.
 \end{proof}

By Lemma \ref{dense}, we can  apply the previous results to approximation with sufficiently dense centers.
\begin{corollary}
For centers $\Xi \in \sphere$, having fill distance $h<h_0$, (with $h_0$ given by Lemma \ref{dense}), if $f\in X_p^s$  
\[\dist(f,S(\k,\Xi))_p\le \const (\k) h^{s} \|f\|_{X_p^{s}}\] 
where  $X_p^s$ is $W_p^{2m}$ when $s=2m$, or $B_{p,\infty}^s$ when $0<s<2m$. 
\end{corollary}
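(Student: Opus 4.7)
The plan is to combine Lemma \ref{dense} with Theorem \ref{Main} (or its preceding corollary) to produce a concrete approximant in $S(\k,\Xi)$ that realizes the asserted rate. Since no new estimates are needed, this corollary is essentially a packaging step: the role of $h$ is just to supply the CKC radius $\rho$ through Lemma \ref{dense}.

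First I would apply Lemma \ref{dense} with precision $L = 2m$. Because $h < h_0$, this produces a measurable coefficient kernel $\a : \Xi \times \sphere \to \reals$ satisfying the CKC with radius $\rho = 48(2m)^2\, h$ and stability constant $K = 2$. The point to record here is that $\rho$ is a fixed multiple of $h$, with constant depending only on $m$.

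With this $\a$ in hand, the hypotheses of Theorem \ref{Main} are met: the integral identity of order $2m$ and the surface-spline decomposition of $\k$ are already in place (directly for surface splines and via Lemma \ref{GreensFns} for polyharmonic kernels). When $X_p^s = W_p^{2m}$, I take the approximant $T_{\Xi}f = p_f + \sum_{\xi \in \Xi} A_{\xi}\, \k(\cdot\, \xi)$ produced by that theorem, note that $p_f \in \Pi_{\mathcal{J}} \subset \Pi$ so $T_{\Xi}f \in S(\k,\Xi)$, and conclude
\[
\dist(f, S(\k,\Xi))_p \le \|f - T_{\Xi}f\|_p \le \const(\k)\, \rho^{2m}\, \|f\|_{W_p^{2m}} = \const'(\k)\, h^{2m}\, \|f\|_{W_p^{2m}}.
\]
For $X_p^s = B_{p,\infty}^{s}$ with $0 < s < 2m$, I instead invoke the preceding corollary using the same $\a$; the real-interpolation argument there delivers an $s_{\Xi,f} \in S(\k,\Xi)$ satisfying $\|f - s_{\Xi,f}\|_p \le \const\, \rho^{s}\, \|f\|_{B_{p,\infty}^{s}}$, and converting $\rho$ to $h$ yields the bound.

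I do not expect a real technical obstacle: everything reduces to verifying that the constants absorbed into $\const(\k)$ depend only on $m$, $d$, the decomposition of $\k$, and on $K = 2$ and the ratio $\rho/h$ from Lemma \ref{dense} — quantities all independent of $h$. The one thing to be careful about is simply tracking that the approximant produced by the earlier results lives in $S(\k,\Xi)$, which is immediate from the forms $T_{\Xi}f$ and $s_{\Xi,f}$ written down above.
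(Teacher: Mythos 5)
Your proposal is correct and follows exactly the same route the paper takes (the paper itself only offers the one-line remark ``By Lemma \ref{dense}, we can apply the previous results''): invoke Lemma \ref{dense} with precision $L=2m$ to obtain a coefficient kernel with $\rho = 48(2m)^2 h$ and $K=2$, then feed that into Theorem \ref{Main} for $s=2m$ or the interpolation corollary for $0<s<2m$, absorbing the fixed ratio $\rho/h$ into the constant. Nothing is missing.
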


\trivlist
   \item[\hskip \labelsep{\hspace{\parindent}%
   \normalfont\itshape Acknowledgments.}]
The author is indebted to Joe Ward and Fran Narcowich for their substantial advice and many helpful discussions. 
He is also grateful for the many helpful comments from the referees.
{\endtrivlist}


\begin{thebibliography}{20}
%
%
\bibitem{BaHu}
{\sc B.~J.~C. Baxter and S.~Hubbert}, {\em Radial basis functions for the sphere}, in Recent progress in multivariate approximation ({W}itten-{B}ommerholz, 2000), vol.~137 of Internat. Ser. Numer. Math., Birkh\"auser, Basel, 2001, pp.~33--47.

\bibitem{DeRo}
{\sc R.~DeVore and A.~Ron}, {\em Approximation using scattered shifts of a multivariate function}, to appear  (2008). arXiv.org: 0802.2517

\bibitem{Free}
{\sc W.~Freeden, T.~Gervens, and M.~Schreiner}, {\em Constructive approximation on the sphere}, Numerical Mathematics and Scientific Computation, The Clarendon Press Oxford University Press, New York, 1998.
\newblock With applications to geomathematics.

\bibitem{Gol}
{\sc M.~Golomb}, {\em Approximation by periodic spline interpolants on uniform meshes}, J. Approximation Theory, 1 (1968), pp.~26--65.

\bibitem{Hang2}
{\sc T.~Hangelbroek}, {\em Approximation by scattered translates of the
fundamental solution of the biharmonic equation on bounded domain}, PhD thesis, University of Wisconsin, Madison, Wisconsin, 2007.

\bibitem{Hang1}
\leavevmode\vrule height 2pt depth -1.6pt width 23pt, {\em Error estimates for thin plate spline approximation in the disk}, Constr. Approx., 28 (2008), pp.~27--59.

\bibitem{HuMo2}
{\sc S.~Hubbert and T.~M. Morton}, {\em A {D}uchon framework for the sphere}, J. Approx. Theory, 129 (2004), pp.~28--57.

\bibitem{HuMo1}
\leavevmode\vrule height 2pt depth -1.6pt width 23pt, {\em {$L\sb p$}-error estimates for radial basis function interpolation on the sphere}, J. Approx. Theory, 129 (2004), pp.~58--77.

\bibitem{JSW}
{\sc K.~Jetter, J.~St{\"o}ckler, and J.~D. Ward}, {\em Error estimates for scattered data interpolation on spheres}, Math. Comp., 68 (1999), pp.~733--747.

\bibitem{LNWW}
{\sc Q.~T. Le~Gia, F.~J. Narcowich, J.~D. Ward, and H.~Wendland}, {\em
Continuous and discrete least-squares approximation by radial basis functions on spheres}, J. Approx. Theory, 143 (2006), pp.~124--133.

\bibitem{MNW2}
{\sc H.~N. Mhaskar, F.~J. Narcowich, and J.~D. Ward}, {\em Approximation
  properties of zonal function networks using scattered data on the sphere},
  Adv. Comput. Math., 11 (1999), pp.~121--137.
\newblock Radial basis functions and their applications.

\bibitem{MNiYao}
{\sc H.~Q. Minh, P.~Niyogi, and Y.~Yao}, {\em Mercer's theorem, feature maps, and smoothing}, in Learning theory, vol.~4005 of Lecture Notes in Comput.Sci., Springer, Berlin, 2006, pp.~154--168.

\bibitem{Mull}
{\sc C.~M{\"u}ller}, {\em Spherical harmonics}, vol.~17 of Lecture Notes in Mathematics, Springer-Verlag, Berlin, 1966.


\bibitem{OdLe}
{\sc C.~Odell and J.~Levesley}, {\em Evaluation of some integrals arising from approximation on the sphere using radial basis functions}, Numer. Funct. Anal. Optim., 23 (2002), pp.~359--365.

\bibitem{Szeg}
{\sc G.~Szeg{\H{o}}}, {\em Orthogonal polynomials}, American Mathematical Society, Providence, R.I., fourth~ed., 1975.
\newblock American Mathematical Society, Colloquium Publications, Vol. XXIII.

\bibitem{Tri}
{\sc H.~Triebel}, {\em Theory of function spaces. {II}}, vol.~84 of Monographs in Mathematics, Birkh\"auser Verlag, Basel, 1992.

\bibitem{Vide}
{\sc V.~S. Videnski{\u\i}}, {\em Extremal estimates for the derivative of a
  trigonometric polynomial on an interval shorter than its period}, Soviet
  Math. Dokl., 1 (1960), pp.~5--8.

\bibitem{Wahba}
{\sc G.~Wahba}, {\em Spline interpolation and smoothing on the sphere}, SIAM J.
  Sci. Statist. Comput., 2 (1981), pp.~5--16.

\bibitem{Wend}
{\sc H.~Wendland}, {\em Scattered data approximation}, vol.~17 of Cambridge Monographs on Applied and Computational Mathematics, Cambridge University Press, Cambridge, 2005.
\end{thebibliography}
\end{document}